\newtheorem{theorem}{Theorem}[section] 
\newtheorem{lemma}[theorem]{Lemma}     
\newtheorem{corollary}[theorem]{Corollary}
\newtheorem{proposition}[theorem]{Proposition}
\newtheorem{remark}[theorem]{Remark}     
\newtheorem{definition}[theorem]{Definition}  
\newcommand{\R}{\mathbb R}
\newcommand{\C}{\mathbb R}
\newcommand{\F}{\mathbb F}
\newcommand{\Ji}{J^{1}}
\DeclareMathOperator{\im}{Im}
\DeclareMathOperator{\rank}{rank}
\begin{document}
\title{Affine geometry of second order ODEs}
\author{Oumar Wone}
\address{Oumar Wone, IMB, 
Institut de Math\'ematiques de Bordeaux, 
Universit\'e Bordeaux I, 351 Cours de la Lib\'eration, 
33405 Talence, France}
\email{oumar.wone@math.u-bordeaux1.fr}
\subjclass[2010]{34A26 (primary), 34A34, 34C14, 58A15 (secondary)}
 \maketitle
 \begin{abstract}
 We apply the Cartan equivalence method to the study of real analytic second order ODEs under the local real analytic diffeomorphism of $\C^2$ which are area-preserving. This enables us to give a characterization of the second order ODEs which are equivalent to $y^{\prime\prime}=0$ under such transformations. Moreover we associate to certain of these second order ODEs which satisfy an invariant condition given by the vanishing of a relative differential invariant, an affine normal Cartan connection on the first jet space whose curvature contains all the area-preserving relative differential invariants.
 \end{abstract}
\section{Introduction}
\label{intr}
\noindent

One of the most fundamental problem in the theory of ordinary or partial differential equations is the problem of classification: given two differential equations, does there exist a change of variable transforming one of them to the other?  We will not deal with this problem in full generality; what we will be concerned with is the equivalence problem of second order real analytic scalar equations under the local real analytic transformations of $\C^2$ with identically constant jacobian: $J\equiv 1$. To be more precise, let

\begin{equation}
\label{eqha01}
y^{\prime\prime}=f(x,y,y^{\prime})
\end{equation}
and
\begin{equation}
\label{eqha02}
\overline{y}^{\prime\prime}=\overline{f}(\overline {x},\overline{y},\overline {y}^{\prime})
\end{equation} 
be two scalar second order equations. They are deemed area-preserving equivalent if there exists 
\begin{equation}
\label{eqha03}
\begin{cases}
 \overline{x}=\chi(x,y)\\
  \overline{y}= \phi(x,y)   & \text{with $J:=\chi_{x}\phi_{y}-\chi_{y}\varphi_{x}\equiv 1$}
\end{cases}
\end{equation}
which maps solutions of  \eqref{eqha01} to those of \eqref{eqha02}; this amounts to the fact that the prolongation of \eqref{eqha03}: 
\begin{equation}
\label{eqha04}
\begin{cases}
 \overline{x}=\chi(x,y)\\
  \overline{y}= \phi(x,y)   & \text{with $J:=\chi_{x}\phi_{y}-\chi_{y}\varphi_{x}\equiv 1$}\\
  \overline{y}^{\prime}=\dfrac{\phi_x+y^\prime\phi_y}{\chi_x+y^\prime\chi_y}
\end{cases}
\end{equation}
to the first jet space $\Ji(\C,\C)$ transforms equation \eqref{eqha01} to \eqref{eqha02}.

We will deal with the problem via the method of equivalence of Cartan. This latter method is based of the Cartan theory of exterior differential systems (more specifically pfaffian systems in our case) and it is by encoding the differential geometric equivalence problems into an equivalence problem of coframes or one-forms that one is able to solve the problem through exterior differentiation. Applying this procedure we are able to solve completely the equivalence problem, obtaining two branches of the latter problem, which are distinguished between themselves by the vanishing of the following differential invariant
\begin{equation}
\label{eqha05}
f_y+\dfrac{2}{9}f_{y^{\prime}}^{2}-\dfrac{1}{3}\mathfrak{D}(f_{y^{\prime}}).
\end{equation}
Concretely if the invariant above vanishes identically, we are able to show that the equivalence problem is solved and characterized by the data of an invariant coframe of dimension five. If on the contrary it is not zero, hence does not vanish on some open set of $\Ji$, the solution of the equivalence problem is given by a coframe of length three (this last case has got two subcases). This is done in section \ref{sec4}. In that section beside solving the equivalence problem, we are able to describe by the vanishing of some relative invariants, the necessary and sufficient conditions for a general equation of type \eqref{eqha01} to be brought the equation $y^{\prime\prime}=0$ under a transformation of area preserving type \eqref{eqha03}; we also give the symmetry group of the coframe for the case $y^{\prime\prime}=0$ (see proposition \ref{pr} and corollary \ref{cor}). We remark that there is an instance of part of the result, concerning the linearizability conditions, in the paper \citep{ozawa1999}; but our approach is completely different. The main feature in our present paper is the interpretation we give to the coframe characterizing the equivalence problem in the case of the vanishing of the invariant of equation \eqref{eqha05}. Indeed in this case, we are able to show that there is a canonical (normal) Cartan geometry associated to the equivalence problem, whose associated connection takes values in the Lie algebra $\mathfrak{asl}(2,\C)$ of the Lie group of special affine transformations $ASL(2,\C)$ and the curvature of which contains all the area-preserving relative differential invariants. This gives a geometrization of the equivalence problem with the coframe giving rise to a Cartan connection. See section \ref{sec4} and more precisely theorem \ref{th}. There is one section, section \ref{sec2}, left to us for description; it contains the preliminary definitions and notations that we will use throughout and also the setting up of the equivalence problem we will study.

Finally we remark that there has been important studies of equivalence problems recently \citep{fels1995,kamr1989,merker2006,nur2003,nuro2003} and our paper is a small contribution to this effort. \section{Preliminary notions and setting of the equivalence problem}
\label{sec2}
\subsection{Preliminary notions}
We give a geometric formulation of PDEs throu-gh the fundamental notions of exterior differential systems. In this context, solutions are represented by integral manifolds.
\begin{definition}
\label{defha01}
An exterior differential system (EDS for short) on a manifold $M$ is given by a differential ideal $\mathcal{I}\subset\Omega^\star(M)$: $d\mathcal{I}\subset \mathcal{I}$ and $\mathcal{I}$ is an algebraic ideal with respect to the addition of forms and the wedge product. An integral manifold of the system $\mathcal{I}$ is an immersed submanifold $s:N\to M$ such that $s^\star(\alpha)=0$ for each $\alpha\in\mathcal{I}$.
\end{definition}
Remark that for an exterior differential system we treat all the local coordinates on an equal footing and also do not specify the dimension of the integral submanifolds. The corresponding notion which deals with this matter is given by an exterior differential system with independence condition
\begin{definition}
\label{defha02}
An exterior differential system with independence condition on a manifold $M$ consists of a differential ideal $\mathcal{I}\subset\Omega^\star(M)$ and a non-vanishing decomposable differential $n$-form $\vartheta\subset \Omega^n(M)$ and defined up to scale. This $ \vartheta$ or its equivalence class $\left[\vartheta\right]$ is called the independence condition. The adopted notation for an EDS with independence condition is $\left(\mathcal{I}, \vartheta\right)$.
\end{definition}
Specific to the notion of $EDS$ with independence condition $\left(\mathcal{I}, \vartheta\right)$, there is a corresponding notion of integral manifold given by the following
\begin{definition}
\label{defha03}
An integral manifold \rm{(or solution)} of the system $\left(\mathcal{I},\vartheta\right)$ is an immersed $n$-fold $s:N^n\to M$ such that $s^\star(\alpha)=0$ for all $\alpha$ and $s^\star(\vartheta)≠0$ at each point of $N$.
\end{definition}
For the problem which concerns us and which uses the Cartan equivalence method, we need the more particular notion of linear pfaffian systems as it is the setting in which the equivalence problem is formulated. We remind that a pfaffian system is an EDS generated by one-forms i.e., $\mathcal{I}=\{\theta^a\}_{diff}$ for $\theta^a$ belonging to $\Omega^1(M)$ and $a$ ranging from $1$ to $s$. If $ \vartheta= \vartheta^1\wedge\cdots\wedge \vartheta^n$ represents an independence condition, let $I=\{\theta^a\}$ and $J=\{\theta^a, \vartheta^i\}$ the algebraic ideals generated by the forms between accolades; we will refer to the previous data as $(I,J)$. We have the following
\begin{definition}
\label{defha04}
$(I,J)$ is a linear pfaffian system if $d\theta^a\equiv 0\mod J$ for all $1≤a≤s$.
\end{definition}
Let $(I,J)$ be a linear pfaffian system as above. Let $\pi^\epsilon$, $1≤\epsilon≤\dim M-n-s$ be a collection of one-forms such that $T^\star M$ is locally spanned by $\theta^a,\, \vartheta^i,\,\pi^\epsilon$. Then the linear pfaffian condition is equivalent to the following condition
\begin{equation}
\label{eqha06}
d\theta^a\equiv A^a_{\epsilon i}\pi^\epsilon\wedge \vartheta^i+T_{ij}^a \vartheta^i\wedge \vartheta^j\mod I,
\end{equation}
where we have used the summation convention which consists of summing indices as soon as they occur up and down in a formula. 

We now formulate the equivalence problem of Cartan. Let $U$ and $V$ two open subsets of $\C^m$ and $G\subset GL(m,\C)$ a Lie subgroup, that is an immersed submanifold which at the same time a subgroup of $GL(m,\C)$. Let $\varepsilon$ and $\overline{\varepsilon}$ respective coframes on $U$ and $V$. The equivalence problem consists of determining all the diffeomorphisms $h:U\to V$ such that
\begin{equation}
\label{eqha07}
h^\star(\overline{\varepsilon}^i)=\lambda_j^i\epsilon^j
\end{equation} 
where $(\gamma)_i^j:U\to G$ and the indices $i,\,j$ range from $1$ to $m$. The fundamental idea of Cartan in dealing with the problem is to work on the cartesian products $U\times G$ and $V\times G$ endowed with the respective one-forms
\begin{equation}
\label{eqha08}
\Upsilon_{(p,S)}^i=S_j^i\pi^\star_U(\varepsilon_p^j)\quad \text{and}\quad \overline{\Upsilon}^i_{(q,T)}=T_j^i\pi_V^\star(\overline{\varepsilon}^j_q),
\end{equation}
with $\pi_U:U\times G\to U$ and $\pi_V:V\times G\to V$, the canonical projections. The equivalence problem amounts now to the determination of diffeomorphisms $\widetilde{h}:U\times G\to V\times G$ such that
$$\widetilde{h}^\star\overline{\Upsilon}^i=\Upsilon.$$
One can then interpret the equivalence problem as the local equivalence problem of $G$-structures, which we first of all define.

Let $\Sigma$ be a manifold of dimension $N$ and let $\F(\Sigma)$ the bundle of linear frames of $\Sigma$. $\F(\Sigma)$ is a principal $GL(N,\C)$-bundle over $\Sigma$ with the left $GL(N,\C)$ action given by $u\to S.u$. A point $u\in\F(\Sigma)$ is a basis of the tangent space $T_p\Sigma$ where $p=\pi(u)$ and $\pi:\F(\Sigma)\to\Sigma$ denotes the canonical projection onto the base. Fix a real vector space $W$ of dimension $N$; an element of $\F(\Sigma)$ thus defines an isomorphism, which we shall denote also by $u$, of $T_p\Sigma$ onto $V$.
\begin{definition}
\label{defha05}
A $G$-structure $B_G$ over $\Sigma$ is a reduction of $\F(\Sigma)$ to the group $G$, that is a submanifold of $\F(\Sigma)$ such for all $u\in B_G$ and all $S\in GL(W)$, the point $S.u$ lies in $B_G$ if and only if $S\in G$. We again denote by $\pi$ the restriction of the canonical projection to $B_G$.
\end{definition}
On $\F(\Sigma)$, there sits a tautological one form, a smooth section of the bundle $T^\star\F(\Sigma)\otimes V$ of $V$-valued one-forms on $\F(\Sigma)$. It is defined as follows. Take $u\in\F(\Sigma)$ and $X\in T_u\F(\Sigma)$; then $\pi_\star(X)\in T_p\Sigma$, where $p=\pi(u)$ and we set
$$\Upsilon=u(\pi_\star X).$$
As $B_G$ is a submanifold of $\F(\Sigma)$, the restriction of the form $\Upsilon$ to $B_G$ is again $V$-valued. We introduce the notions of equivalence and local equivalence of $G$-structures.
\begin{definition}
Two $G$-structures $B_G^1$ and $B_G^2$ over manifolds $\Sigma_1$ and $\Sigma_2$ are called equivalent or isomorphic if there exists a diffeomorphism $h:\Sigma_1\to\Sigma_2$ such that $h_\star B^1_G=B_G^2$. They are called locally isomorphic at $(p,q)\in M_1\times M_2$ if there exists open neighborhoods $U$ and $V$ of $p$ and $q$ such that $B_G^1|_U$ and $B_G^2|_V$ are isomorphic.
\end{definition}
The fundamental remark is that the local equivalence for $G$-structures and the Cartan equivalence problem are all but one thing (when we lift the equivalence problem). Concretely two $G$-structures $B_G^1$ with canonical one-form $\Upsilon$ and $B_G^2$ with canonical one-form $\overline{\Upsilon}$ are locally equivalent if and only if a local diffeomorphism $\widetilde{h}:B_G^1|_U\to B_G^2|_V$ pullbacks the tautological one-form $\overline{\Upsilon}|_V$ of $B_G^2|_V$ to the canonical form $\Upsilon|_U$ of $B_G^1|_U$. This is exactly the lifted Cartan equivalence problem if we choose sufficiently small open sets $U$ and $V$ where $B_G^1$ and $B_G^2$ are trivial. Some last remarks about the different steps of the equivalence method, before we tackle our problem at hand, are the following.

The exterior derivative of each of the one-forms of equation \eqref{eqha08} gives what is called the structures equations. They are given in components by the formulas 
$$d\Upsilon^i=a_{j\rho}^i\varpi^\rho\wedge\Upsilon^j+\dfrac{1}{2}\gamma^i_{jk}\Upsilon^j\wedge\Upsilon^k,1≤i,j,k≤m,\,1≤\rho≤r:=\dim G$$
and
$$d\overline{\Upsilon}^i=a_{j\rho}^i\varpi^\rho\wedge\overline{\Upsilon}^j+\dfrac{1}{2}\overline{\gamma}^i_{jk}\overline{\Upsilon}^j\wedge\overline{\Upsilon}^k,$$
with $\gamma_{jk}^i$ and $\overline{\gamma}_{jk}^i$ anti-symmetric in their lower indices. Let $F$ be an $m$-dimensional real vector of dimension $m$ with a basis $\{e_i,\,1≤i≤m\}$ and $F^\star$ its dual, endowed with the dual basis $\{e_i^\star,\,1≤i≤m\}$. Also, consider $\{\delta_e,\,1≤\rho≤r\}$ the basis of $T_eG\simeq g$ dual the basis $\{\varpi^\rho_e,\,1≤\rho≤r\}$ of $T^\star_eG\simeq g^\star$. Define the map $L:g\otimes V^\star\to V\otimes\Lambda^2V^\star:v_i^\rho\delta_e\otimes e^{\star i}\to (a_{j\rho}^iv_{k}^\rho-a_{k\rho}^iv_j^\rho)e_i\otimes e^{\star j}\wedge e^{\star k}$ and the exact kernel-cokernel sequence
\begin{equation}
0\to\mathfrak{g}^{(1)}\overset{i}{\to}\mathfrak{g}\otimes V^{\star}\overset{L}{\to}V\otimes\Lambda^2V^{*}\overset{pr}{\to}\Pi_\mathfrak{g}\to 0
\end{equation}
where
\begin{equation}
\label{eqci09}
\mathfrak{g}^{(1)}=\ker L, \,\Pi_\mathfrak{g}=V\otimes\Lambda^2V^{*}/\im L:=H^{0,2}(\mathfrak{g}).
\end{equation}
$\mathfrak{g}^{(1)}$ is known the first prolongation of the Lie-algebra $\mathfrak{g}$ and $H^{0,2}(\mathfrak{g})$ the space of intrinsic torsion. The first-order structure tensor or intrinsic torsion is defined by
$\tau:U\times G\to H^{0,2}(\mathfrak{g}):(p,S)\to (pr\circ\gamma)(p,S)$ with $pr$ the canonical projection onto $H^{0,2}(\mathfrak{g})$ and $\gamma$ is given by
$$\gamma:U\times G\to V\otimes\Lambda^2V^\star:(p,S)\to\dfrac{1}{2}\gamma_{jk}^ie_i\otimes e^{\star j}\wedge e^{\star k}.$$
As Gardner \citep{gard1989} has showed, one effective way to determine the structure tensor is to do the so-called Lie-algebra compatible absorption of torsion. The two remaining steps are in the problem of equivalence are the processes of reduction and prolongation, which consist heuristically, for the reduction case, of restricting to a subbundle of the associated $G$-structure $B_G$, by well-choosing certain parameters of the group $G$. The process of prolongation on the other hand deals with the possibility of the presence of higher order conditions of equivalence. It is by enlarging the space in a geometrical manner that it takes those possibilities into account. Finally we know from the Cartan-Kuranishi theorem \citep{kuran1957} that the equivalence problem gives always a solution in the form of an involutive system (Cartan-Kähler \citep{gard1991}) or via an $\{e\}$-structure, when one is successful in reducing all the group parameters.

Our references hereafter are \citep{stern1964,kamr1989,olv1995} and we refer the reader to them from a complete exposition of the details of the equivalence method; it is their formalism that we shall adopt here.
 
\subsection{Setting of the equivalence problem}

In all this paper, $\mathfrak{D}$ and $\overline{\mathfrak{D}}$, will denote the total derivatives
$$\mathfrak{D}:=\partial_x+y^\prime\partial_y+f\partial_{y^\prime}$$
and 
$$\overline{\mathfrak{D}}=\partial_{\overline{x}}+\overline{y}^\prime\partial_{\overline{y}}+\overline{f}\partial_{\overline{y}^\prime}$$

Consider two second ODEs
\begin{equation}
\label{eqha10}
y^{\prime\prime}=f(x,y,y^{\prime})
\end{equation}
and
\begin{equation}
\label{eqha11}
\overline{y}^{\prime\prime}=\overline{f}(\overline{x},\overline{y},\overline{y}^\prime),
\end{equation}
on two copies of $\Ji(\C,\C)$ and the pseudo-group of local $C^\omega$ diffeormorphisms
\begin{equation*}
\label{eqha12}
\begin{cases}
 \overline{x}=\chi(x,y)\\
  \overline{y}= \phi(x,y)   & \text{with $J:=\chi_{x}\phi_{y}-\chi_{y}\varphi_{x}\equiv 1$}.
\end{cases}
\end{equation*}
We remind that our goal is the study of these two differential equations under the above prescribed pseudo-group of equation \eqref{eqha03}. One first remarks that the setting (see equation \eqref{eqha08}) of the equivalence is completely symmetric. So for its study we can restrict only to \eqref{eqha10}. In order to apply the Cartan equivalence method we discussed in the previous section, one has to express the differential equation \rm{(\ref{eqha10})} into a pfaffian system; this is achieved by the following result.
\begin{lemma}
\label{lem1}
The solutions of equation \eqref{eqha10} are in one-to-one correspondence with the integral manifolds of the pfaffian system with independence condition on $\omega^3$: $(I_f, \omega^{3}:= dx)$ on $\Ji(\C,\C)$ where $I_f$ is generated as a differential ideal, relatively to the wedge product, by the $1$-forms $$\omega^{1}:=dy-y^{\prime}dx,\,\omega^{2}:=dy^{\prime}-fdx$$ and $(x,y,y^{\prime})$ are standard coordinates on $\Ji(\C,\C)$ in which the contact form is given by $\omega^{1}$.
\end{lemma}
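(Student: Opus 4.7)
The plan is to verify the correspondence by constructing maps in both directions and checking they are mutual inverses. The setup is standard for jet-space formulations of ODEs, so the argument will be direct verification; the main content is checking that the independence condition $s^\star(dx)\neq 0$ correctly forces an integral manifold to be parametrized as the $1$-graph of a genuine function.

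First I would treat the forward direction. Suppose $y=g(x)$ is a local solution of \eqref{eqha10} on some open interval. Define $s:I\to \Ji(\C,\C)$ by $s(x)=(x,g(x),g'(x))$. Then $s^\star\omega^1 = dg(x)-g'(x)\,dx = 0$, and using $g''(x)=f(x,g(x),g'(x))$, one gets $s^\star\omega^2 = dg'(x)-f(x,g(x),g'(x))\,dx = (g''(x)-f(x,g(x),g'(x)))\,dx = 0$. Since $\omega^1,\omega^2$ generate $I_f$ as a differential ideal, every form in $I_f$ pulls back to zero on $I$; moreover $s^\star(dx)=dx\neq 0$, so the independence condition holds, and $s$ is an integral manifold of $(I_f,\omega^3)$.

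For the converse, let $s:N\to \Ji(\C,\C)$ be a $1$-dimensional integral manifold of $(I_f,\omega^3)$. The condition $s^\star(dx)\neq 0$ at every point of $N$ means the composition of $s$ with the projection $(x,y,y')\mapsto x$ is a local diffeomorphism, so shrinking $N$ if necessary we may parametrize by $x$ and write $s(x)=(x,y(x),p(x))$ for smooth functions $y,p$. The vanishing $s^\star\omega^1 = dy(x)-p(x)\,dx = 0$ gives $p(x)=y'(x)$, i.e.\ the second coordinate equals the derivative of the first. Then $s^\star\omega^2 = dp(x)-f(x,y(x),p(x))\,dx = 0$ gives $p'(x)=f(x,y(x),y'(x))$, that is $y''(x)=f(x,y(x),y'(x))$, so $y$ is a solution of \eqref{eqha10}.

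Finally I would observe that these two constructions are inverse to one another: a solution $g$ yields the integral manifold $x\mapsto(x,g(x),g'(x))$, whose reconstructed first coordinate is again $g$; and an integral manifold whose $x$-projection is the identity is, by the reasoning above, determined by a single function $y$ satisfying the ODE. The only delicate point is the independence condition: without it one could take $0$-dimensional or degenerate integral submanifolds unrelated to solutions, and with a more restrictive condition one might exclude legitimate graphs. Here both directions make essential use of $s^\star(dx)\neq 0$, which is the main (and minor) subtlety of the argument.
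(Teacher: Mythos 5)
Your proof is correct and follows essentially the same route as the paper's: both arguments use the independence condition $s^\star(dx)\neq 0$ to parametrize the integral curve by $x$, use $s^\star\omega^1=0$ to identify the curve with the $1$-jet $x\mapsto(x,c(x),c'(x))$ of a function, and then observe that $s^\star\omega^2=0$ is exactly the ODE $c''=f(x,c,c')$. You merely spell out the two directions and the mutual-inverse check separately, where the paper compresses them into a pair of ``if and only if'' statements.
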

 
\begin{proof}
An integral manifold of $(I_{f},\omega^{3})$ with independence condition on $\omega^3$
is by very definition a curve $s:\C\to \Ji(\C,\C)$ such that $s^{\star}\omega^{1}=s^{\star}\omega^{2}=0$ and $s^{\star}\omega^{3}\neq 0$. The conditions $s^{\star}\omega^{1}=0$ and $s^{\star}\omega^{3}\neq 0$ ($x$ may be chosen as a local coordinate on $\R$) will be satisfied if and only there exists a map $c:\C\to \C: x\to c(x)$ such that $s=j^{1}c$ where 
$$j^1c:\C\to\Ji(\C,\C):x\to\left(x,c(x),\dfrac{dc}{dx}\right).$$
The condition $s^{\star}\omega^{2}=(j^1c)^{\star}\omega^{2}=0$ is then equivalent to 
$$\dfrac{d^2c}{dx^2}=f\left(x,c,\dfrac{dc}{dx}\right).$$
\end{proof}

Consider now a local local diffeomorphism of the prescribed type $\Psi:\Ji(\C,\C)\to \Ji(\C,\C)$ and let $\psi:\C\to\C$ the transformation induced by $\Psi$ through the fibration $\alpha: \Ji(\C,\C)\to\C$ over the source. In other words $\psi$ is defined by the requirement that the diagram 
\begin{equation}
\label{eqha14}
\begin{CD}
\Ji(\C,\C) @>\Psi>> \Ji(\C,\C)\\
@VV\alpha V @VV\alpha V\\
\C @>\psi>> \C
\end{CD}
\end{equation}
commutes.

Using the lemma \ref{lem1}, we see that $\Psi$ will map the $1$-jet of a solution of equation \eqref{eqha10} to the $1$-jet of a solution of equation \eqref{eqha11} if and only if $\Psi\circ s\circ \psi^{-1}$ is an integral manifold of $(I_{F},\overline{\omega}^3)$ as soon as $s$ is an integral manifold of $(I_f,\omega^3)$. This is equivalent to $\Psi$ satisfying
\begin{equation}
\label{eqha15}
\Psi^\star\left(\begin{array}{c}\overline{\omega}^1 \\\overline{\omega}^2 \\\overline{\omega}^3\end{array}\right)=\left(\begin{array}{ccc}a & b & 0 \\c & d & 0 \\e & f & g\end{array}\right)\left(\begin{array}{c}\omega^1 \\\omega^2 \\\omega^3\end{array}\right), \quad \text{with $f(ad-bc)≠0$.}
\end{equation}
We must now express the fact that $\Psi$ is induced by a point transformation, in other words that $\Psi$ is the first prolongation $p^1\Phi$ of an element of $C^\omega_{loc}(J^0(\C,\C))$. We recall that $p^1\Phi$ is defined by the condition that the diagram
\begin{equation}
\label{eqha16}
\begin{CD}
\Ji(\C,\C) @>p^1\Phi>> \Ji(\C,\C)\\
@VV\pi_0^1 V @VV\pi_0^1 V\\
J^0(\C,\C) @>\Phi>> J^0(\C,\C)
\end{CD}
\end{equation}
be commutative and that the contact systems be preserved, that is
\begin{equation}
\label{eqha17}
(p^1\Phi)^\star\overline{\omega}^1=\kappa\omega^1
\end{equation}
with $\kappa:\Ji(\C,\C)\to \C$ a nowhere vanishing function. This is achieved by the following lemma
\begin{lemma}
A necessary and sufficient condition for a local real analytic of area-preserving type $\Psi:\Ji(\C,\C)\to \Ji(\C,\C)$ to be the first prolongation of a local real analytic $\Phi:J^0(\C,\C)\to J^0(\C,\C)$ is that 
 
\begin{equation}
\label{eqha18}
\Psi^\star\left(\begin{array}{c}\overline{\omega}^1 \\\overline{\omega}^2 \\\overline{\omega}^3\end{array}\right)=\left(\begin{array}{ccc}u_1 & 0 & 0 \\u_2 & u_1^2 & u_4 \\u_3 & 0 & \dfrac{1}{u_1}\end{array}\right)\left(\begin{array}{c}\omega^1 \\\omega^2 \\\omega^3\end{array}\right), \quad \text{with $u_1≠0$.}
\end{equation}
\end{lemma}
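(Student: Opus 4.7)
The plan is to verify both implications of the equivalence by direct computation in the standard coordinates on $\Ji(\C,\C)$, with the single identity $\phi_y\chi_x - \phi_x\chi_y = J$ doing essentially all of the work.

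For necessity I would assume $\Psi = p^1\Phi$ with $\Phi:(x,y)\mapsto(\chi(x,y),\phi(x,y))$ and $J\equiv 1$, so that $\overline{y}'\circ\Psi = (\phi_x+y'\phi_y)/(\chi_x+y'\chi_y)$. Denoting this quotient by $P/Q$ with $Q:=\chi_x+y'\chi_y$, I would compute $\Psi^\star\overline{\omega}^3 = d\chi = \chi_y\,\omega^1 + Q\,\omega^3$; and $\Psi^\star\overline{\omega}^1 = d\phi - (P/Q)\,d\chi$, whose coefficients of $dx$ and $dy$ simplify, after clearing the denominator $Q$ and invoking $J=1$, to $-y'/Q$ and $1/Q$ respectively, giving $\Psi^\star\overline{\omega}^1 = (1/Q)\,\omega^1$. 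Finally, $\Psi^\star\overline{\omega}^2 = d(P/Q) - (\overline{f}\circ\Psi)\,d\chi$; its coefficient of $dy'$ is $\partial_{y'}(P/Q) = (\phi_y Q - P\chi_y)/Q^2 = J/Q^2 = 1/Q^2$, so the $\omega^2$ coefficient equals $1/Q^2$. Setting $u_1 := 1/Q$ and $u_3 := \chi_y$, and absorbing the remaining $dx$, $dy$ terms into $u_2$ and $u_4$, produces exactly the matrix displayed in \eqref{eqha18}.

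For sufficiency I would start from the prescribed form and reconstruct $\Phi$. The third row asserts that $d(\overline{x}\circ\Psi) = u_3\omega^1 + u_1^{-1}\omega^3$ has no $dy'$ component, hence $\overline{x}\circ\Psi$ depends on $(x,y)$ alone; call it $\chi(x,y)$. Rewriting the first row as $d(\overline{y}\circ\Psi) = u_1(dy - y'dx) + (\overline{y}'\circ\Psi)\,d\chi$ shows that the right-hand side has no $dy'$ term either, so $\overline{y}\circ\Psi =:\phi(x,y)$ depends only on $(x,y)$ as well. Matching the $dx$ and $dy$ coefficients of the first row then forces $\overline{y}'\circ\Psi = (\phi_x+y'\phi_y)/(\chi_x+y'\chi_y)$, so $\Psi = p^1\Phi$ for $\Phi = (\chi,\phi)$. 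Computing $\partial_{y'}(\overline{y}'\circ\Psi) = J/(\chi_x+y'\chi_y)^2 = Ju_1^2$ and comparing with the prescribed $(2,2)$-entry $u_1^2$ yields $J\equiv 1$, so $\Phi$ is area-preserving.

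The computation is entirely routine; the only substantive point is the remark that for a \emph{general} point-transformation prolongation (not necessarily area-preserving) the same computation would yield $(1,1)$-, $(2,2)$-, $(3,3)$-entries equal to $J/Q$, $J/Q^2$, $Q$ respectively, and the specific pattern $u_1$, $u_1^2$, $1/u_1$ of \eqref{eqha18} is equivalent to the single relation $J\equiv 1$. This is the structural content of the lemma and the only place where care is needed.
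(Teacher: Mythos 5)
Your argument is correct and, in its overall architecture, the same as the paper's: for necessity, explicit pullback of $(\overline{\omega}^1,\overline{\omega}^2,\overline{\omega}^3)$ under $p^1\Phi$ in coordinates, with $u_1=1/(\chi_x+y^\prime\chi_y)$; for sufficiency, reconstruction of $\Phi$ from the shape of the matrix (third row free of $\omega^2$, hence $d(\overline{x}\circ\Psi)$ has no $dy^\prime$; first row proportional to $\omega^1$, giving contact preservation and $\overline{y}^\prime\circ\Psi=(\phi_x+y^\prime\phi_y)/(\chi_x+y^\prime\chi_y)$). The one place you genuinely diverge is how the converse extracts $J\equiv 1$. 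The paper claims that \eqref{eqha18} "leads to $(p^1\Phi)^\star\overline{\omega}^1\wedge\overline{\omega}^2=\omega^1\wedge\omega^2$", but with the matrix of \eqref{eqha18} one actually gets
\begin{equation*}
\Psi^\star\left(\overline{\omega}^1\wedge\overline{\omega}^2\right)=u_1^3\,\omega^1\wedge\omega^2+u_1u_4\,\omega^1\wedge\omega^3,
\end{equation*}
which is not $\omega^1\wedge\omega^2$ in general; the wedge identity the matrix does encode is $\Psi^\star\left(\overline{\omega}^3\wedge\overline{\omega}^1\right)=\omega^3\wedge\omega^1$, i.e.\ $d\overline{x}\wedge d\overline{y}=dx\wedge dy$, which is precisely $J\equiv 1$. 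Your alternative route --- computing for a general (not necessarily area-preserving) prolongation the $(1,1)$-, $(2,2)$-, $(3,3)$-entries $J/Q$, $J/Q^2$, $Q$ and observing that the pattern $u_1$, $u_1^2$, $1/u_1$ forces $J\equiv 1$ (either from $J/Q^2=(J/Q)^2$ or from $Q=1/(J/Q)^{-1}\cdot J^{-1}\cdot Q$, i.e.\ $Q=Q/J$) --- is exactly right, and is in fact a cleaner and more robust way to obtain the area-preserving condition than the wedge-product step as printed in the paper. So: same method overall, with your treatment of the $J\equiv 1$ extraction quietly repairing the one shaky point in the paper's own proof.
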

\begin{proof}
The commutative diagram \eqref{eqha16} and the preservation of contact forms means that $p^1\Phi$ is of the form
$$p^1\Phi:(x,y,y^\prime)\to\left(\chi(x,y),\phi(x,y),\dfrac{\phi_x+y^\prime\phi_y}{\chi_x+y^\prime\chi_y}(x,y)\right).$$
Pulling back the overlined one-forms, one gets the equation \eqref{eqha18} with $u_1=\dfrac{1}{\chi_x+y^\prime\chi_y}$. $\chi_x+y^\prime\chi_y$ does not vanish as $p^1\Phi$ is a local diffeormorphism. Conversely $(p^1\Phi)^\star\overline{\omega}^3\equiv 0$ modulo the ideal generated by $\omega^2$ and $\omega^3$ implies that the diagram \eqref{eqha16} commutes; $(p^1\Phi)^\star\overline{\omega}^1\equiv 0$ modulo the ideal generated by $\omega^1$ implies that the contact systems are preserved and finally equation \eqref{eqha18} gives the area conservation condition as it leads to $(p^1\Phi)^\star\overline{\omega}^1\wedge\overline{\omega}^2=\omega^1\wedge\omega^2$.  
\end{proof}
Combining equations \eqref{eqha15} and \eqref{eqha18}, we get the following lemma which encodes all the information about the local equivalence of two second order ODEs.
\begin{lemma}
A local transformation of $C^\omega$ type $\Psi:\Ji(\C,\C)\to\Ji(\C,\C)$ satisfies the equations \eqref{eqha16} and \eqref{eqha18} if and only if it solves the Cartan equivalence problem
\begin{equation}
\label{eqha19}
\Psi^\star\left(\begin{array}{c}\overline{\omega}^1 \\\overline{\omega}^2 \\\overline{\omega}^3\end{array}\right)=\left(\begin{array}{ccc}u_1 & 0 & 0 \\u_2 & u_1^2 & 0 \\u_3 & 0 & \dfrac{1}{u_1}\end{array}\right)\left(\begin{array}{c}\omega^1 \\\omega^2 \\\omega^3\end{array}\right), \quad \text{with $u_1≠0$.}
\end{equation}
\end{lemma}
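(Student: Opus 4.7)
My plan is to combine the two matrix constraints on $\Psi$: the pfaffian-system preservation form \eqref{eqha15} (coming from the requirement that $\Psi$ sends solutions of \eqref{eqha10} to solutions of \eqref{eqha11}, equivalently integral manifolds of $(I_f,\omega^3)$ to those of $(I_{\overline{f}},\overline{\omega}^3)$) together with the matrix form \eqref{eqha18} of the preceding lemma (expressing that $\Psi$ is the area-preserving first prolongation $p^1\Phi$ of a point transformation). Since $(\omega^1,\omega^2,\omega^3)$ is a coframe on $\Ji(\C,\C)$, the decomposition of each $\Psi^{\star}\overline{\omega}^i$ in this coframe is unique, so the two matrix representations of the pullback must coincide entry-by-entry.

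The comparison is essentially bookkeeping. Seven of the nine entries are identified at once: $a=u_1$, $b=0$, $c=u_2$, $d=u_1^2$, $e=u_3$, $g=1/u_1$, and the common $0$ in position $(1,3)$. The two non-trivial identifications occur in the remaining slots. In position $(2,3)$, the $0$ of \eqref{eqha15} meets the parameter $u_4$ of \eqref{eqha18} and forces $u_4=0$; symmetrically, in position $(3,2)$, the $0$ of \eqref{eqha18} forces the coefficient in the $(3,2)$-slot of \eqref{eqha15} to vanish. The resulting matrix is precisely the one in \eqref{eqha19}, and the non-degeneracy $g(ad-bc)\neq 0$ of \eqref{eqha15} collapses to the single condition $u_1\neq 0$.

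For the converse, I would simply observe that any matrix of the shape appearing in \eqref{eqha19} fits both templates \eqref{eqha15} and \eqref{eqha18} (the latter with $u_4=0$), so by the preceding lemma $\Psi$ is automatically the area-preserving first prolongation of some point transformation $\Phi$, and in particular the commutative diagram \eqref{eqha16} holds. There is no real obstacle in this proof; the content of the lemma is only the remark that the prolongation constraint and the pfaffian-system preservation constraint, when imposed simultaneously, kill exactly the free parameter $u_4$ from \eqref{eqha18}. This yields the initial Cartan lifted coframe $(\Upsilon^1,\Upsilon^2,\Upsilon^3)$ with a five-parameter structure group on which the absorption-normalization procedure of the equivalence method will subsequently be carried out.
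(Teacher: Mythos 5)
Your proof is correct and is essentially the paper's own argument: the paper obtains the lemma precisely by \emph{combining} \eqref{eqha15} and \eqref{eqha18}, which is the entry-by-entry identification you spell out (uniqueness of the expansion in the coframe $(\omega^1,\omega^2,\omega^3)$, the slot $(2,3)$ forcing $u_4=0$, the slot $(3,2)$ forcing the corresponding coefficient of \eqref{eqha15} to vanish, and the determinant condition collapsing to $u_1\neq 0$), with the reference to \eqref{eqha16} in the statement being a typo for \eqref{eqha15}, as you implicitly recognize, and the paper's nondegeneracy condition correctly read as $g(ad-bc)\neq 0$. The only slip is peripheral to the lemma: the structure group of \eqref{eqha19} is the three-parameter group in $u_1,u_2,u_3$ displayed at the start of Section \ref{sec3}, not a five-parameter one.
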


\section{Solution of the Equivalence Problem}
\label{sec3}
We consider the Cartan equivalence problem with $3$-dimensional structural group $G$ given by
\begin{equation}
\label{eqha20}
G=\left(\begin{array}{ccc}u_1 & 0 & 0 \\u_2 & u^2_1 & 0 \\u_3 & 0 & \dfrac{1}{u_1}\end{array}\right),\quad u_1≠0.
\end{equation}
As explained in the previous section in order to solve the equivalence problem, one has to work on the $G$-structure $B_G$ over $\Ji(\R,\R)$, and it is by differentiating the tautological one-form which sits on it that one is able to derive conditions of equivalence. This one-form is given in local coordinates by
\begin{equation}
\label{eqha21}
\theta=\left(\begin{array}{c}\theta^1 \\\theta^2 \\\theta^3\end{array}\right)=\left(\begin{array}{ccc}u_1 & 0 & 0 \\u_2 & u_1^2 & 0 \\u_3 & 0 & \dfrac{1}{u_1}\end{array}\right)\left(\begin{array}{c}\omega^1 \\\omega^2 \\\omega^3\end{array}\right)
\end{equation}
 The first step towards the solution of any Cartan equivalence problem is to determine the so-called structure equations. In our case they are given by the below formulas
 \begin{equation}
 \label{eqha22}
 \begin{split}
 d\theta^1&=\pi^1\wedge\theta^1+T^1_{31}\theta^3\wedge\theta^1+T^1_{32}\theta^3\wedge\theta^2+T^1_{21}\theta^2\wedge\theta^1\\
 d\theta^2&=\pi^2\wedge\theta^1+2\pi^1\wedge\theta^2+T^2_{31}\theta^3\wedge\theta^1+T^2_{32}\theta^3\wedge\theta^2+T^2_{21}\theta^2\wedge\theta^1\\
 d\theta^3&=\pi^3\wedge\theta^1-\pi^1\wedge\theta^2+T^3_{31}\theta^3\wedge\theta^1+T^3_{32}\theta^3\wedge\theta^2+T^3_{21}\theta^2\wedge\theta^1
 \end{split}
\end{equation}
where $\pi^1$, $\pi^2$ and $\pi^3$ are components of the right-invariant Maurer-Cartan matrix of the Lie subgroup $G$ of equation \eqref{eqha20}. The latter matrix is given by
\begin{equation}
\label{eqha23}
\left(\begin{array}{ccc}\pi^1 & 0 & 0 \\\pi^2 & 2\pi^1 & 0 \\\pi^3 & 0 & -\pi^1\end{array}\right)=\left(\begin{array}{ccc}\frac{du_1}{u_1} & 0 & 0 \\\frac{du_2}{u_1}-2u_2\frac{du_1}{u_1^2} & 2\frac{du_1}{u_1} & 0 \\\frac{du_3}{u_1}+u_3\frac{du_1}{u_1^2} & 0 & -\frac{du_1}{u_1^2}\end{array}\right).
\end{equation} 
We absorb the torsion without changing how we denote the components of the Maurer-Cartan matrix (modulo basic forms). That we set
\begin{equation}
\label{eqha13}
\begin{split}
\pi^1&\to\pi^1+T_{31}^1\theta^3+T_{21}^1\theta^2\\
\pi^2&\to\pi^2+T_{31}^2\theta^3+T_{21}^2\theta^2\\
\pi^3&\to\pi^3+T_{31}^3\theta^3+T_{21}^3\theta^2
\end{split}
\end{equation}
The new structure equations are given by
 \begin{equation}
 \label{eqha24}
 \begin{split}
d\theta^1&=\pi^1\wedge\theta^1+T^1_{32}\theta^3\wedge\theta^2\\
 d\theta^2&=\pi^2\wedge\theta^1+2\pi^1\wedge\theta^2+\overline{T}^2_{31}\theta^3\wedge\theta^1\\
 d\theta^3&=\pi^3\wedge\theta^1-\pi^1\wedge\theta^2.
 \end{split}
\end{equation}
$T^1_{32}=1$ and $\overline{T}^2_{31}=u_1f_{y^\prime}+3\dfrac{u_2}{u_1}$. We restrict to the subbundle $B_{G_{(1)}}$ of $B_G$ characterized by the fact that $\overline{T}^2_{31}\equiv 0$. This gives
\begin{equation}
\label{eqha25}
G_{(1)}=\left(\begin{array}{ccc}u_1 & 0 & 0 \\0 & u_1^2 & 0 \\u_3 & 0 & \dfrac{1}{u_1}\end{array}\right)
\end{equation}
and the tautological one-form is given in local coordinates by
\begin{equation}
\label{eqha26}
\theta=\left(\begin{array}{c}\theta^1 \\\theta^2 \\\theta^3\end{array}\right)=\left(\begin{array}{ccc}u_1 & 0 & 0 \\0 & u_1^2 & 0 \\u_3 & 0 & \dfrac{1}{u_1}\end{array}\right)\left(\begin{array}{c}\omega^1 \\\omega^2 \\\omega^3\end{array}\right)
\end{equation}
with $\omega^2$ assuming now the local expression
$$\omega^2=dy^\prime-fdx-\dfrac{1}{3}f_{y^\prime}(dy-y^\prime dx).$$
We work parametrically. The structure equations on $B_{G_{(1)}}$ are locally given by
\begin{equation}
\label{eqha27}
\begin{split}
d\theta^1&=\dfrac{du_1}{u_1}\wedge\theta^1+\theta^3\wedge\theta^2+\dfrac{u_3}{u_1}\theta^2\wedge\theta^1+\dfrac{1}{3}u_1f_{y^{\prime}}\theta^3\wedge\theta^1\\
d\theta^2&=2\dfrac{du_1}{u_1}\wedge\theta^2+\dfrac{2}{3}u_1f_{y^{\prime}}\theta^3\wedge\theta^2+\dfrac{1}{3u_1}\left(f_{y^{\prime}y^{\prime}}-2u_1u_3f_{y^{\prime}}\right)\theta^1\wedge\theta^2\\
                 &+u_1^2\left(f_y+\dfrac{2}{9}f_{y^{\prime}}^{2}-\dfrac{1}{3}\mathfrak{D}(f_{y^{\prime}})\right)\theta^3\wedge\theta^1\\
d\theta^3&=\dfrac{u_3}{u_1}\left(\dfrac{du_3}{u_3}+\dfrac{du_1}{u_1}\right)\wedge\theta^1-\dfrac{du_1}{u_1}\wedge\theta^3+\dfrac{u_3}{u_1}\theta^3\wedge\theta^2+\left(\dfrac{u_3}{u_1}\right)^2\theta^2\wedge\theta^1\\
                 &+\dfrac{1}{3}u_3f_{y^{\prime}}\theta^{3}\wedge\theta^{1};                 
\end{split}
\end{equation} 

after collection of the torsion, we set 
\begin{equation}
\label{eqha28}
\begin{split}
\pi^1&=\dfrac{du_1}{u_1}+\dfrac{u_3}{u_1}\theta^{2}+\dfrac{1}{3}u_1f_{y^{\prime}}\theta^3+\dfrac{1}{6u_1}\left(f_{y^\prime y^\prime}-2u_1u_3f_{y^{\prime}}\right)\theta^1\\
\pi^2&=\dfrac{u_3}{u_1}\left(\dfrac{du_1}{u_1}+\dfrac{du_3}{u_3}\right)+\left(\dfrac{u_3}{u_1}\right)^2\theta^2+\dfrac{1}{3}u_3f_{y^\prime}\theta^3-\dfrac{1}{6u_1}\left(f_{y^{\prime}y^{\prime}}-2u_1u_3f_{y^{\prime}}\right)\theta^3.
\end{split}
\end{equation}
This gives us
\begin{equation}
\label{eqha29}
\begin{split}
d\theta^1&=\pi^1\wedge\theta^1+\theta^3\wedge\theta^2\\
d\theta^2&=2\pi^1\wedge\theta^2+u_1^2\left(f_y+\dfrac{2}{9}f_{y^{\prime}}^{2}-\dfrac{1}{3}\mathfrak{D}(f_{y^{\prime}})\right)\theta^{3}\wedge\theta^{1}\\
d\theta_3&=\pi^2\wedge\theta^1-\pi^1\wedge\theta^3.
\end{split}
\end{equation}

Assume $f_y+\dfrac{2}{9}f_{y^{\prime}}^{2}-\dfrac{1}{3}\mathfrak{D}(f_{y^{\prime}})$ does not vanish anywhere (we can arrange to be in this situation as we are working locally). Then it takes locally, a strictly positive or strictly negative value. If its value is strictly negative then we reduce to the subbundle of $B_{G_{(1)}}$ characterized by the fact that
$$u_1^2\left(f_y+\dfrac{2}{9}f_{y^{\prime}}^{2}-\dfrac{1}{3}\mathfrak{D}(f_{y^{\prime}})\right)\equiv 1.$$
If on the contrary its value is strictly negative then we restrict to the subbundle of $B_{G_{(1)}}$ where the relation
$$u_1^2\left(f_y+\dfrac{2}{9}f_{y^{\prime}}^{2}-\dfrac{1}{3}\mathfrak{D}(f_{y^{\prime}})\right)\equiv -1$$
is satisfied. In any of the two cases, we see easily using equation \eqref{eqha27} that we can further reduce to the subbundle characterized by
$$u_3=0,$$
thus we get an $\{e\}$-structure $B_{\{e\}}\simeq \Ji(\R,\R)$. We summarize the preceding discussion in the following proposition
\begin{proposition}
 If the relative invariant $f_y+\dfrac{2}{9}f_{y^{\prime}}^{2}-\dfrac{1}{3}\mathfrak{D}(f_{y^{\prime}})\equiv 0$, then the Cartan equivalence method yields an $\{e\}$-structure or complete parallelism on $\Ji(\R,\R)$.
 \end{proposition}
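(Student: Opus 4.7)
The plan is to exploit that when $I := f_y + \tfrac{2}{9}f_{y'}^{2} - \tfrac{1}{3}\mathfrak{D}(f_{y'})$ vanishes identically, the sole essential torsion coefficient in \eqref{eqha29} disappears, leaving on $B_{G_{(1)}}$ the constant-coefficient structure equations
\[
d\theta^{1}=\pi^{1}\wedge\theta^{1}+\theta^{3}\wedge\theta^{2},\qquad d\theta^{2}=2\pi^{1}\wedge\theta^{2},\qquad d\theta^{3}=\pi^{2}\wedge\theta^{1}-\pi^{1}\wedge\theta^{3}.
\]
Since no residual torsion now depends on the group parameters $u_{1},u_{3}$, no further reduction of $G_{(1)}$ is forced by absorption. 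I would interpret the proposition's ``$\{e\}$-structure on $\Ji(\R,\R)$'' as the complete parallelism living on the $5$-dimensional bundle $B_{G_{(1)}}\to\Ji(\R,\R)$; this is consistent with the introduction's ``coframe of dimension five'', the base $\Ji(\R,\R)$ being named simply as the base of the parallelism.

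The substantive step is then to verify that $(\theta^{1},\theta^{2},\theta^{3},\pi^{1},\pi^{2})$ is pointwise a coframe on $B_{G_{(1)}}$. From \eqref{eqha26}, the three $\theta^{i}$ are semibasic for the projection $B_{G_{(1)}}\to\Ji(\R,\R)$ and span its horizontal cotangent spaces; from \eqref{eqha28}, the vertical components of $\pi^{1}$ and $\pi^{2}$ read off as $du_{1}/u_{1}$ and $du_{3}/u_{1}$ (modulo horizontal terms), which are linearly independent of one another and of the $\theta^{i}$. A vertical/horizontal split of $T^{\star}B_{G_{(1)}}$ then yields the required pointwise linear independence of all five forms.

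Once the coframe property is established, every $2$-form on $B_{G_{(1)}}$ is automatically a wedge polynomial in $(\theta^{i},\pi^{\alpha})$, so the same is true of $d\pi^{1},d\pi^{2}$; the precise structure equations for these are pinned down by the Bianchi identities $d^{2}\theta^{i}=0$ ($i=1,2,3$) applied to the torsion-free system above. Computing $d(d\theta^{1})=0$, for example, yields an identity of the shape $d\pi^{1}\wedge\theta^{1}+\pi^{2}\wedge\theta^{1}\wedge\theta^{2}=0$, determining $d\pi^{1}$ modulo $\theta^{1}$; the remaining two closure conditions similarly fix $d\pi^{1},d\pi^{2}$ up to coframe ambiguity, giving the full $\{e\}$-structure on $B_{G_{(1)}}$.

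The main obstacle is checking that this parallelism is genuinely canonical, i.e.\ that neither a further hidden reduction of $G_{(1)}$ nor a new prolongation parameter is smuggled into the Bianchi analysis. Concretely this is a finite linear-algebra calculation: one must verify that the Spencer map of $\mathfrak{g}_{(1)}$, restricted to the subspace annihilating the (now vanishing) intrinsic torsion, forces the coefficients of $d\pi^{\alpha}$ to be functions already present on $B_{G_{(1)}}$ rather than new invariants. Granted this verification -- the only nontrivial computation in the proof -- the vanishing of $I$ delivers the parallelism and hence the proposition.
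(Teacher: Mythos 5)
Your proposal does not prove the statement the paper actually establishes here, and its central step fails on its own terms. First, on the reading of the statement: despite the ``$\equiv 0$'' in the proposition as printed, the proposition explicitly \emph{summarizes the preceding discussion}, and that discussion treats the case where the relative invariant is \emph{nowhere vanishing}. The paper's proof is the two-step reduction carried out just above the proposition: normalize $u_1^2\bigl(f_y+\frac{2}{9}f_{y^{\prime}}^{2}-\frac{1}{3}\mathfrak{D}(f_{y^{\prime}})\bigr)\equiv\pm 1$ (the two subcases according to the sign), then use \eqref{eqha27} to normalize $u_3=0$; this kills all group parameters and yields an $\{e\}$-structure $B_{\{e\}}\simeq\Ji(\R,\R)$, i.e.\ the ``coframe of length three'' announced in the introduction. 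The ``$\equiv 0$'' in the displayed proposition is a slip, and the phrase ``on $\Ji(\R,\R)$'' confirms this: in the genuinely vanishing case the paper does \emph{not} stop at $B_{G_{(1)}}$ but prolongs, and the resulting five-dimensional parallelism \eqref{eqha37} lives on $B_{G_{(2)}^{(1)}}$, a bundle \emph{over} $\Ji(\R,\R)$ --- neither on $\Ji(\R,\R)$ itself nor on $B_{G_{(1)}}$ as you propose.

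Second, even granting your literal reading ($I\equiv 0$) and your correct observation that $(\theta^1,\theta^2,\theta^3,\pi^1,\pi^2)$ is a pointwise coframe of the five-dimensional $B_{G_{(1)}}$, the verification you defer to the end --- that no prolongation parameter is ``smuggled into'' the Bianchi analysis --- is exactly where the argument breaks, and the paper computes that it does break: the replacement $\pi^2\mapsto\pi^2+t_1\theta^1$ of \eqref{eqha32}, with $t_1$ an arbitrary function, leaves the structure equations \eqref{eqha29} invariant, so $\dim\mathfrak{g}_{(1)}^{(1)}=1$ and your five-coframe is not invariantly defined. Its structure functions shift with $t_1$ --- this is precisely the term $\bigl(t_1+\frac{1}{6u_1^3}f_{y^{\prime}y^{\prime}y^{\prime}}+\cdots\bigr)\theta^2\wedge\theta^1$ in \eqref{eqha35} --- so they are not differential invariants, and the identities $d^2\theta^i=0$ determine $d\pi^1$, $d\pi^2$ only modulo this one-parameter ambiguity; Bianchi alone cannot select a canonical $\pi^2$. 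Since $s_1^\prime=2$, $s_2^\prime=s_3^\prime=0$ and $\dim\mathfrak{g}_{(1)}^{(1)}=1<s_1^\prime+2s_2^\prime+3s_3^\prime=2$, Cartan's test fails, the problem is not involutive, and prolongation is mandatory; only after prolonging and imposing the normalization $t_1=-\frac{2}{3}\frac{u_3^2}{u_1}f_{y^\prime}-\frac{1}{6u_1^3}f_{y^{\prime}y^{\prime}y^{\prime}}+\frac{1}{2}\frac{u_3}{u_1^2}f_{y^{\prime}y^{\prime}}$ forced by the torsion of $d\Omega^1$ does one obtain the canonical $\{e\}$-structure \eqref{eqha37} on $B_{G_{(2)}^{(1)}}$. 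The ``finite linear-algebra verification'' you ask to be granted is therefore not a formality: it is false, and the no-prolongation route cannot produce an invariant parallelism.
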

 
 Suppose now that the equation
\begin{equation}
\label{eqha30}
f_y+\dfrac{2}{9}f_{y^{\prime}}^{2}-\dfrac{1}{3}\mathfrak{D}(f_{y^{\prime}})\equiv 0
\end{equation}
holds. One sees right away from the equations \eqref{eqha29} that the structure tensor has only constant components, yet the structure group of $B_{G_{(1)}}$ (equation \eqref{eqha25}) is not the identity. In order to continue the procedure one has to make the arithmetic test devised by Cartan, the so-called Cartan involutivity test. If the Lie algebra $\mathfrak{g}_{(1)}$ is involutive then we deduce that any $G_{(1)}$-structures are equivalent. In on the contrary it is not then on has to prolong the equivalence problem because there is a possibility there remains higher order differential invariants. This is how the Cartan test for our situation is done. Let $v\in\R^3$ and $L\left[v\right]$ the $3\times 2$ matrix  
\begin{equation}
\label{eqha31}
L\left[v\right]= \left(\begin{array}{cc}v^1 & 0 \\2v^2 & 0 \\-v^3 & v^1\end{array}\right).
\end{equation} 
The first reduced Cartan character $s^\prime_1$ is defined by
$$s^\prime_1=\max\{\rank L\left[v\right], v\in\R^3\},$$
the second reduced Cartan character is given by
$$s^\prime_1+s^\prime_2=\max\left\{\rank \left\{\begin{array}{c}L\left[v_1\right]\\ L\left[v_2\right]\end{array}\right\}, v_1,v_2\in\R^3\right\},$$
and 
$$s^\prime_1+s^\prime_2+s^\prime_3=2.$$
Setting $v=(1,0,-1)$ we see $s^\prime_1=2$; therefore $s^\prime_2=0=s^\prime_3=0$. We must compute the dimension of the Lie algebra $\mathfrak{g}_{(1)}^{(1)}$. This is done by determining the ambiguity in the choice of the forms $\pi^1$ and $\pi^2$ in equation \eqref{eqha29}. One readily sees that it is the transformation

\begin{equation}
\label{eqha32}
\left(\begin{array}{c}\pi^1 \\\pi^2\end{array}\right)\to \left(\begin{array}{c}\pi^1 \\\pi^2\end{array}\right)+\left(\begin{array}{ccc}0 & 0 & 0 \\t_1 & 0 & 0\end{array}\right) \left(\begin{array}{c}\theta^1 \\\theta^2 \\\theta^3\end{array}\right)
\end{equation}
where $t_1$ is an arbitrary function on $B_{G_{(1)}}$, that leaves invariant the structure equations \eqref{eqha29}. This gives $\dim \mathfrak{g}_{(1)}^{(1)}=1$. We make the Cartan test:
$$\dim \mathfrak{g}_{(1)}^{(1)}< s^\prime_1+2s^\prime_2+3s_3^\prime=2.$$

Thus we have to prolongate the equivalence problem. 

The prolongated equivalence problem is defined on the $G_{(1)}^{(1)}$-structure $B_{G_{(1)}^{(1)}}$ with associated structure group 
\begin{equation}
\label{eqha33}
G_{(1)}^{(1)}=\left(\begin{array}{ccccc}1 & 0 & 0 & 0 & 0 \\0 & 1 & 0 & 0 & 0 \\0 & 0 & 1 & 0 & 0 \\0 & 0 & 0 & 1 & 0 \\t_1 & 0 & 0 & 0 & 1\end{array}\right)
\end{equation}
and canonical one-form
\begin{equation}
\label{eqha34}
\left(\begin{array}{c}\overline{\theta}^{1} \\\overline{\theta}^{2} \\\overline{\theta}^{3} \\\Omega^1 \\\Omega^2\end{array}\right)=\left(\begin{array}{ccccc}1 & 0 & 0 & 0 & 0 \\0 & 1 & 0 & 0 & 0 \\0 & 0 & 1 & 0 & 0 \\0 & 0 & 0 & 1 & 0 \\t_1 & 0 & 0 & 0 & 1\end{array}\right)\left(\begin{array}{c}\theta^1 \\\theta^2 \\\theta^3 \\\pi^1 \\\pi^2\end{array}\right)
\end{equation}

We compute $d\Omega_1$:
\begin{equation}
\label{eqha35}
\begin{split}
d\Omega^1&=\Omega^2\wedge\theta^2\\
&+\left(t_1+\dfrac{1}{6u_1^3}f_{y^\prime y^\prime y\prime}+\dfrac{2}{3}\dfrac{u^2_3}{u_1}f_{y^\prime}-\dfrac{1}{2}\dfrac{u_3}{u_1^2}f_{y^\prime y^\prime}\right)\theta^2\wedge\theta^1\\
&+\left(-\dfrac{1}{3}f_{y^\prime y}-\dfrac{1}{18}f_{y^\prime}f_{y^\prime y^\prime}+\dfrac{1}{6}\mathfrak{D}(f_{y^\prime y^\prime})\right)\theta^3\wedge\theta^1\\
&+u_1u_3\left(f_y-\dfrac{1}{3}\mathfrak{D}(f_{y^{\prime}})+\dfrac{2}{9}f_{y^{\prime}}^2\right)\theta^3\wedge\theta^1.
\end{split}
\end{equation}
The computation of $d\Omega^2$ gives
\begin{equation}
\label{eqha36}
\begin{split}
d\Omega^{2}&=dt_1\wedge\theta^1+3t_1\Omega^1\wedge\theta^1+2\Omega^2\wedge\Omega^1\\
&+\left(-\dfrac{1}{2u_1}f_{y^{\prime}y^{\prime}}+\dfrac{4}{3}u_3f_{y^{\prime}}\right)\Omega^2\wedge\theta^1\\
&+\left(t_1+\dfrac{2}{3}\dfrac{u_3^2}{u_1}f_{y^\prime}+\dfrac{1}{6u_1^3}f_{y^{\prime}y^{\prime}y^{\prime}}-\dfrac{1}{2}f_{y^{\prime}y^{\prime}}\dfrac{u_3}{u_1^2}\right)\theta^3\wedge\theta^2\\
&-\dfrac{u_3^2}{3u_1^3}\left(f_{y^\prime y^{\prime}}-2u_1u_3f_{y^\prime}\right)\theta^2\wedge\theta^1\\
&+\dfrac{1}{u_1^2}\left(\dfrac{1}{6}\left(f_{y^{\prime}y^{\prime}y}+\dfrac{1}{3}f_{y^{\prime}}f_{y^{\prime}y^{\prime}y^{\prime}}\right)-\dfrac{1}{36}f_{y^{\prime}y^{\prime}}^2\right)\theta^3\wedge\theta^1\\
&+\dfrac{u_3}{u_1}\left(-\dfrac{1}{18}f_{y^{\prime}}f_{y^{\prime}y^{\prime}}-\dfrac{1}{6}\mathfrak{D}(f_{y^{\prime}y^{\prime}})-\dfrac{2}{3}f_{y^{\prime}y}\right)\theta^3\wedge\theta^1\\
&+u_3^2\left(f_y+\frac{1}{3}\mathfrak{D}(f_{y^{\prime}})\right)\theta^3\wedge\theta^1.
\end{split}
\end{equation}

We recollect the torsion and do reduction by setting $$t_1=-\dfrac{2}{3}\dfrac{u_3^2}{u_1}f_{y^\prime}-\dfrac{1}{6u_1^3}f_{y^{\prime}y^{\prime}y^{\prime}}+\dfrac{1}{2}\dfrac{u_3}{u_1^2}f_{y^{\prime}y^{\prime}};$$ we obtain an $\{e\}$-structure on a bundle $B_{G_{(2)}^{(1)}}$ of dimension $5$ with structure equations

\begin{equation}
\label{eqha37}
\begin{split}
d\theta^{1}&=\Omega^{1}\wedge\theta^{1}+\theta^3\wedge\theta^2\\
d\theta^{2}&=2\Omega^1\wedge\theta^2\\
d\theta^{3}&=\Omega^{2}\wedge\theta^{1}-\Omega^{1}\wedge\theta^{3}\\
d\Omega^1&=\Omega^2\wedge\theta^2+\left(-\dfrac{1}{3}f_{y^\prime y}-\dfrac{1}{18}f_{y^\prime}f_{y^\prime y^\prime}+\dfrac{1}{6}\mathfrak{D}(f_{y^\prime y^\prime})\right)\theta^3\wedge\theta^1\\
 d\Omega^{2}&=2\Omega^2\wedge\Omega^1-\dfrac{1}{6u_1^5}f_{y^\prime y^\prime y^\prime y^\prime}\theta^2\wedge\theta^1 \\
                         &+\dfrac{1}{u_{1}^2}\left(\dfrac{1}{6}f_{y^{\prime}y^{\prime}y}-\dfrac{1}{6}\mathfrak{D}(f_{y^{\prime}y^{\prime}y^{\prime}})-\dfrac{1}{9}f_{y^{\prime}}f_{y^{\prime}y^{\prime}y^{\prime}}+\dfrac{1}{18}f_{y^{\prime}y^{\prime}}^2\right)\theta^3\wedge\theta^1\\
                         &+2\dfrac{u_3}{u_1}\left(-\dfrac{1}{18}f_{y^{\prime}}f_{y^{\prime}y^{\prime}}+\dfrac{1}{6}\mathfrak{D}(f_{y^{\prime}y^{\prime}})-\dfrac{1}{3}f_{y^{\prime}y}\right)\theta^3\wedge\theta^1
 \end{split}
\end{equation}
Let $I_1$ be the coefficient of $\theta^3\wedge\theta^1$ in $d\Omega^1$, $I_2$ respectively $I_3$ the coefficient of $\theta^2\wedge\theta^1$ respectively the one of $\theta^3\wedge\theta^1$ in $d\Omega^2$. Taking the exterior derivatives $d^2\Omega^1$ and $d^2\Omega^2$, one has the following relations between $I_1$, $I_2$, $I_3$
\begin{equation}
\label{eqha38}
\begin{split}
dI_1+I_3\theta^2&\equiv 0\mod \theta^1, \theta^3\\
dI_3+2I_3\Omega^1-2I_1\Omega^2+J\theta^2&\equiv 0\mod \theta^1, \theta^3\\
dI_2+5I_2\Omega^1+J\theta^3 &\equiv 0\mod \theta^1, \theta^2
\end{split}
\end{equation}
for some function $J$. Therefore only two among the invariants $I_1$, $I_2$, $I_3$ are functionally independent, namely $I_1$ and $I_2$. 

In the previous structure equations all the non-zero coefficients are differential invariants of the equivalence problem. Moreover we can use them in order to give necessary and sufficient conditions for equivalence under area-preserving transformations to a given equation. Taking into account the condition \eqref{eqha30}, we first have the following proposition
\begin{proposition}
\label{pr}
The invariants of the equation \rm{(\ref{eqha37})} vanish if and only if 
\begin{enumerate}
  \item $f_{y^{\prime}y^{\prime}y^{\prime}y^{\prime}}=0$, ie 
  $$f(x,y,y^{\prime})=A(x,y)y^{\prime3}+3B(x,y)y^{\prime2}+3C(x,y)y^{\prime}+D(x,y)$$
  \item and 
  \begin{equation}
  \label{eqha39}
  \begin{split}
       D_y-C_x&=2(BD-C^2)\\
C_y-B_x&=(AD-BC)\\
B_y-A_x&=2(AC-B^2).
\end{split}
\end{equation}
\end{enumerate}
\end{proposition}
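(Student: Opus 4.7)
My plan is to focus on the three relative invariants appearing in the curvature of the final $\{e\}$-structure (\ref{eqha37}). Let $I_1$ denote the coefficient of $\theta^3\wedge\theta^1$ in $d\Omega^1$, $I_2$ the coefficient of $\theta^2\wedge\theta^1$ in $d\Omega^2$, and $I_3$ the coefficient of $\theta^3\wedge\theta^1$ in $d\Omega^2$. A first simplification comes from the Bianchi-type identity $dI_1+I_3\theta^2\equiv 0\pmod{\theta^1,\theta^3}$ recorded in (\ref{eqha38}): as soon as $I_1\equiv 0$, linear independence of $\theta^2$ from $\theta^1,\theta^3$ forces $I_3\equiv 0$. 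It is therefore enough to characterize when $I_1$ and $I_2$ vanish.

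Since $I_2$ is a nonvanishing multiple of $f_{y^{\prime}y^{\prime}y^{\prime}y^{\prime}}$, the condition $I_2\equiv 0$ amounts precisely to $f$ being cubic in $y^{\prime}$, which is statement (1). Writing $f=Ay^{\prime 3}+3By^{\prime 2}+3Cy^{\prime}+D$ with $A,B,C,D$ real analytic functions of $(x,y)$, I would then substitute into the standing hypothesis
$$f_y+\tfrac{2}{9}f_{y^{\prime}}^{2}-\tfrac{1}{3}\mathfrak{D}(f_{y^{\prime}})\equiv 0$$
of (\ref{eqha30}) and expand in powers of $y^{\prime}$. By direct computation the $y^{\prime 4}$ and $y^{\prime 3}$ terms cancel identically (reflecting the covariance of (\ref{eqha30}) under the area-preserving pseudogroup), leaving a polynomial of degree at most two in $y^{\prime}$; equating the $y^{\prime 2}$, $y^{\prime}$, and constant coefficients to zero recovers exactly the three quasilinear PDEs of (\ref{eqha39}). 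This settles the forward direction.

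For the converse I would plug the cubic form of $f$ into
$$I_1=-\tfrac{1}{3}f_{y^{\prime}y}-\tfrac{1}{18}f_{y^{\prime}}f_{y^{\prime}y^{\prime}}+\tfrac{1}{6}\mathfrak{D}(f_{y^{\prime}y^{\prime}})$$
and verify, again by matching coefficients of $y^{\prime}$, that the cubic and quadratic parts cancel identically while the linear and constant parts are (up to sign) the first two equations of (\ref{eqha39}); hence (\ref{eqha39}) forces $I_1\equiv 0$, and then $I_3\equiv 0$ via the Bianchi identity recalled above. The main obstacle is not conceptual but combinatorial: the expansion of $\mathfrak{D}(f_{y^{\prime}})$ produces a degree-four polynomial in $y^{\prime}$ whose coefficients mix the eight first partial derivatives $A_x,A_y,\ldots,D_y$ with all quadratic combinations of $A,B,C,D$, so the cancellations leading cleanly to (\ref{eqha39}) must be tracked with care.
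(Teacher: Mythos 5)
Your proposal is correct, and its computational core coincides with the paper's: both arguments reduce statement (1) to $I_2\equiv 0$ via $I_2=-\tfrac{1}{6u_1^5}f_{y'y'y'y'}$ with $u_1\neq 0$, and both obtain all three equations of \eqref{eqha39} by substituting the cubic form of $f$ into the standing branch condition \eqref{eqha30} and matching coefficients (your observation that the $y'^4$ and $y'^3$ coefficients cancel identically is correct: they are $2A^2-2A^2$ and $A_y+8AB-A_y-8AB$). Where you genuinely diverge is in the treatment of the remaining invariant: the paper expands \emph{all four} vanishing conditions \eqref{eqha40} directly, in particular the fourth one (the $u_1^{-2}$-part of $I_3$), which collapses to the single redundant relation $A_x-B_y+2(AC-B^2)=0$, whereas you dispatch $I_3$ entirely through the Bianchi-type identity $dI_1+I_3\theta^2\equiv 0\bmod\theta^1,\theta^3$ of \eqref{eqha38}: since $I_1$ is a pullback from the base, $I_1\equiv 0$ gives $dI_1=0$, and linear independence of $\theta^2$ from $\theta^1,\theta^3$ within the coframe forces $I_3\equiv 0$. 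This is valid and saves the expansion of the fourth condition; it also exploits the paper's own (stated but unused-in-the-proof) remark that only $I_1$ and $I_2$ are functionally independent. The trade-off is that your route presupposes the identity \eqref{eqha38}, obtained from $d^2\Omega^1=0$, while the paper's direct expansion is self-contained at the cost of one more polynomial computation. One small slip: in your converse step the linear and constant coefficients of $I_1=-(B_y-A_x-2(AC-B^2))y'-(C_y-B_x-(AD-BC))$ are (up to sign) the \emph{third and second} equations of \eqref{eqha39}, not the first two; this does not affect the argument, since \eqref{eqha39} is assumed in full in that direction.
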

\begin{proof}
From the structure equations (\ref{eqha37}), one sees that  the invariants vanish if and only if
\begin{equation}
\label{eqha40}
\begin{split}
&\dfrac{1}{6}f_{y^\prime y^\prime y^\prime y^\prime}=0\\
&-\dfrac{1}{6}\left(2f_{y^{\prime}y}+\dfrac{1}{3}f_{y^{\prime}}f_{y^{\prime}y^{\prime}}-\mathfrak{D}(f_{y^{\prime}y^{\prime}})\right)=0\\
&f_y+\dfrac{2}{9}f_{y^{\prime}}^2-\dfrac{1}{3}\mathfrak{D}(f_{y^{\prime}})=0\\
&-\dfrac{1}{18}\left(f_{y^{\prime}y{\prime}}^2-2f_{y^{\prime}}f_{y^{\prime}y^{\prime}y^{\prime}}\right)-\dfrac{1}{6}\left(f_{y^{\prime}y^{\prime}y}-\mathfrak{D}\left(f_{y^{\prime}y^{\prime}y^{\prime}}\right)\right)=0.
\end{split}
\end{equation}
The first condition gives us the form
$$f=A(x,y)y^{\prime3}+3B(x,y)y^{\prime2}+3C(x,y)y^{\prime}+D(x,y)$$
for the function $f$. Using this we get three other conditions (by exploiting the just given form of $f$)
 \begin{equation}
 \label{eqha41}
\begin{split}
&(D_y-C_x-2(BD-C^2))+2(C_y-B_x-(AD-BC))y^{\prime}\\
                   &+(B_y-A_x-2(AC-B^2))y^{\prime2}=0\\
&6(C_y-B_x-(AD-BC))+6(B_y-A_x-2(AC-B^2))y^{\prime}=0\\
&A_x-B_y+2(AC-B^2)=0
\end{split}
\end{equation}
with the last relations resulting from the last three relations of equation (\ref{eqha40}) respectively. 
Therefore if the specified conditions hold, all the invariants vanish and conversely as a polynomial in one variable vanishes if and only if its coefficients vanish identically.
\end{proof}
Given a coframe $\Upsilon$ on a manifold $\Sigma$, we remind that its symmetry group or self-equivalences is formed of all the diffeomorphisms of $\Sigma$: $h^\star(\Upsilon)=\Upsilon$ and that the Lie group of special affine transformations is given in standard representation in $M_3(\R)$ by
$$ASL(2,\R)=\biggl\{\left(\begin{array}{cc}1 & 0 \\x & A\end{array}\right), x\in\R^2,A\in SL\rm{(}2,\R\rm{)}\biggl\}.$$
Its Lie-algebra is
$$\mathfrak{asl}(2,\R)\cong \R^2\oplus\mathfrak{sl}(2,\R).$$
We have the following corollary
\begin{corollary}
\label{cor}
A second order ordinary differential equation $y^{\prime\prime}=f(x,y,y^{\prime})$ is equivalent under an area-preserving transformation to $y^{\prime\prime}=0$ if and only if 
\begin{equation}
 \label{eqha42}
\begin{split}
\begin{cases}
f(x,y,y^{\prime})&=A(x,y)y^{\prime3}+3B(x,y)y^{\prime2}+3C(x,y)y^{\prime}+D(x,y)\\
D_y-C_x&=2(BD-C^2)\\
C_y-B_x&=(AD-BC)\\
B_y-A_x&=2(AC-B^2).
\end{cases}
\end{split}
\end{equation}
 Moreover, there is a unique equivalence class of second order ordinary differential equations admitting the maximal Lie group of symmetries $ASL(2,\R)$, namely the equivalence class of $y^{\prime\prime}=0$.
\end{corollary}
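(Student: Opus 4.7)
The plan is to derive the corollary directly from Proposition \ref{pr} together with the classical equivalence theorem for $\{e\}$-structures. For the characterization, I first observe that $y''=0$ corresponds to $f\equiv 0$, i.e.\ $A=B=C=D=0$, which trivially meets both the cubic-polynomial form and the compatibility system \eqref{eqha39}; by Proposition \ref{pr} the three invariants $I_1,I_2,I_3$ of the $\{e\}$-structure \eqref{eqha37} on $B_{G_{(1)}^{(1)}}$ therefore vanish identically in this case. Conversely, if $f$ satisfies \eqref{eqha42} then Proposition \ref{pr} again yields $I_1=I_2=I_3=0$. The two $\{e\}$-structures thus share identical (zero) structure functions, and the classical theorem of Cartan on local equivalence of $\{e\}$-structures with identical constant structure functions furnishes a local diffeomorphism of $B_{G_{(1)}^{(1)}}$-bundles; this diffeomorphism descends through the tower of reductions back to $\Ji(\R,\R)$ as the sought area-preserving point transformation. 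The reverse implication is a strict reading of this argument: an area-preserving equivalence of ODEs lifts to an equivalence of $\{e\}$-structures, which must preserve invariants, forcing $I_1=I_2=I_3=0$ and hence \eqref{eqha42}.

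For the symmetry statement, I substitute $I_1=I_2=I_3=0$ into \eqref{eqha37} to obtain
\begin{align*}
d\theta^1 &= \Omega^1 \wedge \theta^1 + \theta^3 \wedge \theta^2,\\
d\theta^2 &= 2\Omega^1 \wedge \theta^2,\\
d\theta^3 &= \Omega^2 \wedge \theta^1 - \Omega^1 \wedge \theta^3,\\
d\Omega^1 &= \Omega^2 \wedge \theta^2,\\
d\Omega^2 &= 2\Omega^2 \wedge \Omega^1,
\end{align*}
and identify these as the Maurer--Cartan equations of $\mathfrak{asl}(2,\R)=\R^2\oplus\mathfrak{sl}(2,\R)$. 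Concretely, in a standard basis $\{H,X,Y,e_1,e_2\}$ with brackets $[H,X]=2X$, $[H,Y]=-2Y$, $[X,Y]=H$, $[H,e_1]=e_1$, $[H,e_2]=-e_2$, $[X,e_2]=e_1$, $[Y,e_1]=e_2$, the dual coframe $(\alpha^H,\alpha^X,\alpha^Y,\beta^1,\beta^2)=(-\Omega^1,\theta^2,-\Omega^2,\theta^1,\theta^3)$ matches all five equations. Hence the $\{e\}$-structure is locally a left-invariant coframe on $ASL(2,\R)$, and left translation supplies a $5$-dimensional transitive symmetry group.

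To obtain maximality and uniqueness I invoke the standard bound stating that the symmetry group of an $\{e\}$-structure on a manifold of dimension $n$ has dimension at most $n$, with equality exactly when all structure functions are locally constant; here $n=5=\dim ASL(2,\R)$. It remains to rule out maximal symmetry with non-zero constant invariants, and this is where the Bianchi identities \eqref{eqha38} do the work: setting $dI_1=dI_2=dI_3=0$ and using the $\R$-linear independence of $\Omega^1,\Omega^2$ from the semi-basic forms $\theta^1,\theta^2,\theta^3$, the three identities successively force $I_3=0$, $I_2=0$, $I_1=0$. Combined with the first part, this shows that the equivalence class of $y''=0$ is the unique one attaining the maximal symmetry group $ASL(2,\R)$. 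The only genuine bookkeeping step I foresee as an obstacle is the explicit identification of the structure constants with those of $\mathfrak{asl}(2,\R)$, but this is a finite linear computation and not a conceptual difficulty.
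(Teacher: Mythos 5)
Your proof is correct and follows essentially the same route as the paper: both directions of the characterization reduce via Proposition \ref{pr} to the vanishing of the invariants $I_1,I_2,I_3$, and the maximality/uniqueness claim is obtained, exactly as in the paper, by combining the bound $\dim\le 5$ for the symmetry group of an $\{e\}$-structure with the identities \eqref{eqha38} forcing constant invariants to vanish, so that the structure equations become the Maurer--Cartan equations of $\mathfrak{asl}(2,\R)$ with left translations as symmetries. You are in fact slightly more explicit than the paper, which leaves the sufficiency direction (Cartan's equivalence theorem for coframes with identical constant structure functions, plus descent to $\Ji(\R,\R)$) implicit; the only slip, a harmless one, is that the $\{e\}$-structure \eqref{eqha37} lives on the five-dimensional bundle $B_{G_{(2)}^{(1)}}$, not on $B_{G_{(1)}^{(1)}}$.
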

\begin{proof}
The conditions given by (\ref{eqha42}) are verified by $y^{\prime\prime}=0$. Since they are equivalent (from the previous proposition) to the vanishing of the invariants of the structure equations (\ref{eqha37}), every equation equivalent to $y^{\prime\prime}=0$ under area-preserving transformations must satisfy them. The second part is proved by observing that the maximal symmetry group of dimension $5$ is achieved if and only if the structure tensor has only constant components, then from equation \eqref{eqha38} we deduce the fact they all vanish hence we end up locally with the Maurer-Cartan structure equations of $ASL(2,\R)$, which are of course invariant under pullback by all automorphisms in the transformation group $ASL(2,\R)$: the left translations $L_a$ with $a$ belonging to $ASL(2,\R)$. 
\end{proof}

\section{Affine normal Cartan connection}
\label{sec4}
We remind that a $G$-principal bundle $P$ is a quadruple $(P,M,G,\pi)$. $P$ is the total space, $M$ the base manifold, $G$ is a Lie group acting on the right on $P$ and $\pi$ the projection is a submersion. We will note it (the principal bundle) $\pi:P\to M$ or $G\to P\to M$.

Let $q\in P$, $x=\pi(q)\in M$, $p\in\pi^{-1}(x)$. Let the group right action be denoted by $R$:
\begin{equation}
\begin{split}
R&:G\times P\to P\\
R(g,p)&=R_g(p):=p.g
\end{split}
\end{equation}

Take $p$ as previously and define the vertical subspace $V_p\subset T_pP$ at $P$ as
$$V_p=\ker \pi_{\star}.$$
A vector field $X$ on $P$ will be called vertical if $X_p\in V_p$ for any $p$. The vertical vector fields form a $G$-invariant involutive distribution as
$$(R_g)_{\star}V_p=V_{p.g}$$
and the Lie bracket of two vertical vector fields is again vertical.

In the absence of any extra structure, there is no natural way to choose a complement to $V_p$ in $T_pP$.  This exactly what a connection provides.
\begin{definition}
A connection on $P$ is a smooth choice of horizontal subspaces $H_p\subset T_pP$ complementary to $V_p$:
$$T_pP=V_p\oplus H_p$$
and such that $(R_g)_{\star}H_p=H_{p.g}$. This also means that a connection is a $G$-invariant distribution $H\subset TP$ complementary to $V$.
\end{definition} 
Let $v\in\mathfrak{g}$ and $\varrho(v)$ the associated vector field on $P$ induced by the $G$-action. One has the following 
 \begin{definition}
 The connection one-form of a connection $H\subset TP$ is the $\mathfrak{g}$-valued one form $\omega$ defined by
  \begin{equation}
  \omega(X)=
\begin{cases}
      v& \text{if $X=\varrho(v)$ }, \\
      0& \text{if $X$ is horizontal}.
\end{cases}
\end{equation}
It obeys to the identity
$$(R_{g})^{\star}\omega=Ad_{g^{-1}}\omega$$
with $Ad$ the adjoint representation of the Lie group $G$.
 \end{definition}
 Now a form on $P$ is said horizontal if it it annihilates the vertical vectors. 
 
 Let $\omega_1$ a $1$-form with values in $\mathfrak{g}$ with and $\omega_2$ another $1$-form with values in $\mathfrak{g}$. We define the $2$-form
 $$\left[\omega_1,\omega_2\right]:(X,Y)\to \left[\omega_1(X),\omega_2(Y)\right]+\left[\omega_2(X),\omega_1(Y)\right]$$
 
 Consider now a connection with form $\omega$. Its curvature is defined by the formula
 \begin{equation}
\Omega=d\omega+\dfrac{1}{2}\left[\omega,\omega\right].
\end{equation}

When $\omega$ is given in matrix form, ie there exists a representation of $\mathfrak{g}$ in some $M_n(\R)$, the curvature takes the form
$$\Omega=d\omega+\omega\wedge\omega$$
with $\omega\wedge\omega$ defined by
$$\omega\wedge\omega(X,Y)=\sum_k\omega_{ik}\wedge\omega_{kj}(X,Y)=(\omega_{ik}(X))(\omega_{kj}(Y))-(\omega_{ik}(Y))(\omega_{kj}(X)).$$
 Next we define with \citep{shar2000} the notions of Cartan geometry and connection

\begin{definition}
\label{defi1}
A Cartan geometry $\xi=(P,\omega,G,H)$ on a manifold $M$ modeled on $(\mathfrak{g},\mathfrak{h})$ with group $H$ \rm{(closed Lie subgroup of $G$)} consists of the following data:
\begin{itemize}
\item a smooth manifold $M$
\item a principal right $H$-bundle $P$ over $M$.
\item a $\mathfrak{g}$-valued $1$-form $\omega$ on $P$ \rm{(}called Cartan connection\rm{)} which satisfies the following conditions:
\begin{enumerate}
\item for each point $p\in P$, the linear map $\omega_p:T_pP\to\mathfrak{g}$ is a linear isomorphism;
\item $(R_h)^{\star}\omega=\rm{Ad}_{h^{-1}}\omega$ for all $h\in H$; 
\item $\omega(\varrho(v))=v$ for all $v\in\mathfrak{h}$ with $\varrho(v)$ the associated vector field.
\end{enumerate}
\end{itemize}
\end{definition}
The curvature of a Cartan connection $\omega$ is defined analogously and verifies the relation
$$\Omega=d\omega+\omega\wedge\omega,$$
when $\omega$ is a matrix of one-forms. If it vanishes the model geometry is called flat.
\begin{remark}
A Cartan connection in $P$ is not a connection in the usual sense since it is not $\mathfrak{h}$-valued. It can however be considered as a connection on an associated bundle to $P$, obtained by enlarging the group structure of $P$, $H$, to the group $G$, ie by considering the principal $G$-bundle
$$P^G=P\times_{H}G.$$
See \rm{\citep{nag1964}}.
\end{remark}
We remind that the $\{e\}$-structure $B_{G_{(2)}^{(1)}}$ is endowed with the coframe $(\theta^1,\theta^2,\theta^3,\Omega^1,\Omega^2)$ defined locally by the equation
\begin{equation}
\label{eqha43}
\begin{split}
\theta^{1}&=u_1\omega^1\\
\theta^{2}&=u_{1}^2\omega^2\\
\theta^{3}&=u_3\omega^1+\dfrac{1}{u_1}\omega^3\\
\Omega^{1}&=\dfrac{du_1}{u_1}+\dfrac{u_3}{u_1}\theta^2+\dfrac{1}{3}u_1f_{y^{\prime}}\theta^3+\dfrac{1}{6u_1}\left(f_{y^\prime y^\prime}-2u_1u_3f_{y\prime}\right)\theta^1\\
\Omega^{2}&=\dfrac{u_3}{u_1}\left(\dfrac{du_1}{u_1}+\dfrac{du_3}{u_3}\right)+\left(\dfrac{u_3}{u_1}\right)^2\theta^2+\dfrac{1}{3}u_3f_{y^\prime}\theta^3\\
&-\dfrac{1}{6u_1}\left(f_{y^{\prime}y^{\prime}}-2u_1u_3f_{y^{\prime}}\right)\theta^3\\
&+\left(-\dfrac{2}{3}\dfrac{u_3^2}{u_1}f_{y^\prime}-\dfrac{1}{6u_1^3}f_{y^{\prime}y^{\prime}y^{\prime}}+\dfrac{1}{2}\dfrac{u_3}{u_1^2}f_{y^{\prime}y^{\prime}}\right)\theta^1.
\end{split}
\end{equation}

Our goal in the remainder of the paper is to show that the coframe above, which defines on $B_{G_{(2)}^{(1)}}$ an $\{e\}$-structure with structure equations \eqref{eqha37}, gives a normal (unique) Cartan connection on $B_{G_{(2)}^{(1)}}$ with values in $\mathfrak{asl}(2,\R)$ and structure group the following closed (in the induced topology of $GL(3,\R))$, $2$-dimensional subgroup of $ASL(2,\R)$ parameterized in the following way
\begin{equation}
\label{eqha44}
H_:=\left\{\left(\begin{array}{ccc}1 & 0 & 0 \\0 & v_1 & -v_3 \\0 & 0 & \dfrac{1}{v_1}\end{array}\right)\right\},
\end{equation}
with $v_1\in\R^\star$, $v_3\in\R$.  

This is how we will tackle the problem.

Let $\pi:B_{G_{(2)}^{(1)}}\to \Ji(\R,\R)$ the canonical projection. Choose a local trivializing open set $U$ of $\Ji(\R,\R)$, which enables us to identify $\pi^{-1}(U)\simeq U\times H$. Assuming we have chosen the open sets $U$ as a locally finite open cover (denoted $U_\alpha)$ of $\Ji(\R,\R)$ with associated partition of unity $f_\alpha$, we get a Cartan connection on $B_{G_{(2)}^{(1)}}$ by means of the formula
\begin{equation}
\label{eqha45}
\omega=\sum_\alpha (f_\alpha\circ\pi)\pi^\star\omega_\alpha
\end{equation}
where $\omega_\alpha$ denotes the Cartan connection on $U_\alpha\times H$.
 
Let $U_\alpha\times H$ with the notation as above. 

Arrange the coframe of \eqref{eqha43} into the following form (the candidate connection)
 \begin{equation}
\label{eqha46}
\begin{split}
\omega_\alpha=\left(\begin{array}{ccc}0 & 0 & 0 \\\theta^3 & \Omega^1 & -\Omega^2 \\\theta^1 & \theta^2 & -\Omega^1\end{array}\right)
\end{split}
\end{equation}
 We first remark that $\omega_\alpha$ is $\mathfrak{asl}(2,\R)$-valued; moreover the structure equations \eqref{eqha37} are equivalent to $d\omega_\alpha+\omega_\alpha\wedge\omega_\alpha$ and the following equation (with the notations of equation \eqref{eqha38}) holds
 \begin{equation}
 \label{eqha47}
 d\omega_\alpha+\omega_\alpha\wedge\omega_\alpha=\left(\begin{array}{ccc}0 & 0 & 0 \\0 & I_1\theta^3\wedge\theta^1 & -I_2\theta^2\wedge\theta^1-I_3\theta^3\wedge\theta^1 \\0 & 0 & -I_1\theta^3\wedge\theta^1\end{array}\right).
 \end{equation}
 Therefore the curvature of $\omega_\alpha$ defines and contains all the area-preserving differential invariants.

We have to verify the $3$ conditions of bullet $3$ of the definition \ref{defi1}.
 
 The first condition is obvious, the mapping $(\omega_\alpha)_p:T_pB_{G_{(2)}^{(1)}}\to \mathfrak{g}$ is injective as $(\theta^1,\theta^2,\theta^3,\Omega^1,\Omega^2)$ is a coframe of $B_{G_{(2)}^{(1)}}$ and $T_pB_{G_{(2)}^{(1)}}$ and $\mathfrak{g}$ have same dimension.
 
  To show the third requirement of bullet $3$ of definition \ref{defi1}, we remark that the fundamental vector field associated to $v\in\mathfrak{h}$ is given in $U_\alpha \times H$ by 
  \begin{equation}
  \label{eqha48}
  \varrho(v)=(0,V)
  \end{equation}
  with $V$ the associated left invariant vector field on $H$. Finally remarking that the left Maurer-Cartan form $\omega_H$ of $H$ is given in coordinates by 
  \begin{equation}
  \label{eqha49}
 \omega_H= \left(\begin{array}{ccc}0 & 0 & 0 \\0 & \frac{du_1}{u_1} & -\frac{u_3}{u_1}\left(\frac{du_1}{u_1}+\frac{du_3}{u_3}\right) \\0 & 0 & -\frac{du_1}{u_1}\end{array}\right)
  \end{equation}
  and applying $\omega$ to $\varrho(v)=(0,V)$ we get 
  \begin{equation}
  \omega_\alpha(\varrho(v))=v=\omega_H(V).
  \end{equation}

  It remains to verify the last condition concerning the equivariant behavior of $\omega_\alpha$ under the right action of $H$.
  
 On $U_\alpha\times H$, $H$ acts in the following way
 
\begin{equation}
\label{eqha50}
\begin{split}
\Phi&:H\times(U\times H)\to U\times H\\
& (T,(p,S))\to (p,ST)=\Phi_T(p,S).
\end{split}
\end{equation}
 If $S=\left(\begin{array}{ccc}1 & 0 & 0 \\0 & u_1 & -u_3 \\0 & 0 & \frac{1}{u_1}\end{array}\right)$ and $T=\left(\begin{array}{ccc}1 & 0 & 0 \\0 & v_1 & -v_3 \\0 & 0 & \frac{1}{v_1}\end{array}\right)$ then we have 
 $$ST=\left(\begin{array}{ccc}1 & 0 & 0 \\0 & u_1v_1 & -u_1v_3-\frac{u_3}{v_1} \\0 & 0 & \frac{1}{u_1v_1}\end{array}\right).$$
 But a straightforward computation gives us 
 \begin{equation}
 \label{eqha51}
 \begin{split}
 Ad_{T^{-1}}\omega_\alpha&=T^{-1}\omega_\alpha T\\
 &=\left(\begin{array}{ccc}0 & 0 & 0 \\\frac{\theta^3}{v_1}+v_3\theta^1 & \Omega^1+v_1v_3\theta^2 & -2\frac{v_3}{v_1}\Omega^1-v_3^2\theta^2-\frac{\Omega^2}{v_1^2}\ \\v_1\theta^1 & v_1^2\theta^2 & -\Omega^1-v_1v_2\theta^2\end{array}\right).
 \end{split}
 \end{equation}
 Also 
 \begin{equation}
 \label{eqha52}
 \Phi_T^\star\theta^1=v_1\theta^1,\, \phi_T^\star\theta^2=v_1^2\theta^2,\,\Phi_T^\star\theta^3=\dfrac{1}{v_1}\theta^3+v_3\theta^1.
 \end{equation}
 Moreover $\Phi_T^\star\omega_H=Ad_{T^{-1}}\omega_H$ because, for $v\in T_SH$, $\omega_H(v)=dL_{ S^{-1}}(v)$; therefore $\omega_H(d\Phi_T v)=dL_{(ST)^{-1}}\circ d\Phi_T (v)=dL_{T^{-1}}\circ d\Phi_T\circ dL_{S^{-1}}(v)=Ad_{T^{-1}}\omega_H(v)$.
 
 Pulling back $\omega_\alpha$ under $\Phi_T$ while using equation \eqref{eqha52} and the just mentioned identity satisfied by $\omega_H$, gives the desired relation namely
 \begin{equation}
 \Phi_T^\star\omega_\alpha=Ad_{T^{-1}}\omega_\alpha.
 \end{equation}

Therefore we have shown to every second order equation \eqref{eqha01} satisfying the condition
$$f_y+\dfrac{2}{9}f_{y^{\prime}}^{2}-\dfrac{1}{3}\mathfrak{D}(f_{y^{\prime}})=0$$
 is associated an $\mathfrak{asl}(2,\R)$ Cartan connection on a $H$ principal bundle $B_{G_{(2)}^{(1)}}$ over the first jet space $\Ji(\R,\R)$.

 We want now to show the Cartan connection $\omega$ of \eqref{eqha46} is the normal (unique) one satisfying certain ''normalization'' conditions.

  In order to do so, we start with a connection matrix $\omega$ on the $H$ principal bundle $B_{G_{(2)}^{(1)}}$ over $\Ji$ with values in the special affine algebra $\mathfrak{asl}(2,\R)$ and work with local coordinates $(x,y,y^{\prime},u_1,u_2)$ compatible with the local trivialization $U_\alpha\times H$. We build the connection on such a trivialization and use a gluing procedure by means of a partition of unity as previously done.
   
 Fix the cross-section $s:U_\alpha\to U_\alpha\times H:p\to(p,e)$ characterized by 
 $$(u_1=1,u_2=0).$$  
 Let $\eta$ the value of the connection $w_\alpha$ on the section $s$. We impose some natural conditions on $\eta$ and then lift it to a connection matrix on $U_\alpha\times H$ via the formula (with $h\in H$ a generic element)

 \begin{equation}
\label{eqha182}
w_\alpha=h^{-1}\eta h+h^{-1}dh.
\end{equation} 

Moreover the curvature is in this case given by the formula
 \begin{equation}
\label{eq194}
dw_\alpha+w_{\alpha}\wedge w_{\alpha}=h^{-1}(d\eta+\eta\wedge\eta)h
\end{equation}

 The given one form $\eta$ can be written in component form as
 
 \begin{equation}
\label{eqha183}
\eta=\left(\begin{array}{ccc}0 & 0 & 0 \\\eta^1 & \eta^1_1 & \eta^1_2 \\\eta^2 & \eta_1^2& \eta_2^2\end{array}\right).
\end{equation}
Now since $\eta$ must be with values in $\mathfrak{asl}(2,\R)$, then 
$$\eta^1_1+\eta^2_2=0.$$
The structure equations of this connection, derived from the structure equations of $\mathfrak{asl}(2,\R)$ are thus given by
\begin{equation}
\label{eqha184}
\begin{split}
d\eta^1&=-\eta^1_1\wedge\eta^1-\eta_2^1\wedge\eta^2+\Theta^1\\
d\eta^2&=-\eta^2_1\wedge\eta^1-\eta^2_2\wedge\eta^2+\Theta^2\\
d\eta_1^1&=-\eta_2^1\wedge\eta_1^2+\Theta_1^1\\
d\eta_1^2&=-2\eta_1^2\wedge\eta_1^1+\Theta_1^2\\
d\eta_2^1&=2\eta_2^1\wedge\eta_1^1+\Theta_2^1.
\end{split}
\end{equation}
We want this connection matrix to define a connection over the open set $U_\alpha$ of $\Ji$; so we choose a basis of one forms $\eta^1$, $\eta^2$, $\Pi$ on $U_\alpha$. This basis has to define the general solutions of equation (\ref{eqha01}) (modulo pullback). 

With the previous specifications, $\Theta^i$ and $\Theta^j_i$ must be horizontal with respect to the fibration $\pi:B_{G_{(2)}^{(1)}}|_{U_\alpha}\to U_\alpha$ ie must be in the algebraic ideal with respect to wedge product, generated by $<\theta^1,\theta^2,\Pi>$. See \citep[prop.2, p.220]{nag1964} or \citep[chap.5, cor.3.10]{shar2000}.

The structure equations (\ref{eqha184}) hence are more explicitly written in the form
\begin{equation}
\label{eqha185}
\begin{split}
d\eta^1&=-\eta^1_1\wedge\eta^1-\eta_2^1\wedge\eta^2+\Theta^1\\
d\eta^2&=-\eta^2_1\wedge\eta^1-\eta^2_2\wedge\eta^2+\Theta^2\\
d\eta_1^1&=-\eta_2^1\wedge\eta_1^2+R_{113}^1\eta^1\wedge\Pi+R_{123}^1\eta^2\wedge\Pi+R_{112}^1\eta^1\wedge\eta^2\\
d\eta_1^2&=-2\eta_1^2\wedge\eta_1^1+R_{113}^2\eta^1\wedge\Pi+R_{123}^2\eta^2\wedge\Pi+R_{112}^2\eta^1\wedge\eta^2\\
d\eta_2^1&=2\eta_2^1\wedge\eta_1^1+R_{213}^1\eta^1\wedge\Pi+R_{223}^1\eta^2\wedge\Pi+R_{212}^1\eta^1\wedge\eta^2.
\end{split}
\end{equation}
We choose $\eta^1=dx$, $\eta^2=dy-y^{\prime}dx$ and $\Pi=dy^{\prime}-fdx+\mu(dy-y^\prime dx)$. This implies $\Pi=\eta^2_1$.

We set now the following "normalization" condition in order to fix the connection:

\begin{equation}
\label{eqha300}
\begin{split}
\Theta^1&=\Theta^2=0\\
R_{113}^1&=R_{123}^1=R_{113}^2=R_{123}^2=0.
\end{split}
\end{equation}
\begin{remark}
Geometrically one can say that the conditions given in the previous equation \eqref{eqha300} correspond to absorption of torsion and normalization in the Cartan equivalence method.
 \end{remark} 
If $\Theta^1$ and $\Theta^2$ vanishes then
$$\eta^1_1\wedge dx+\eta_2^1\wedge(dy-y^\prime dx)=0$$
and
$$-dy^{\prime}\wedge dx=-(dy^\prime-fdx+\mu(dy-y^\prime dx))\wedge dx-\eta_2^2\wedge(dy-y^\prime dx)$$
This gives by applying the Cartan lemma to the first condition while taking into account the second one
\begin{equation}
\label{eqha186}
\begin{split}
\eta_1^1&=-\mu dx+\delta(dy-y^\prime dx)\\
\eta_2^1&=\delta dx+\nu(dy-y^\prime dx)\\
\eta_2^2&=\mu dx-\delta(dy-y^\prime dx)
\end{split}
\end{equation}
for functions $\mu$, $\delta$, $\nu$ on $\Ji$. Using the other "normalization" conditions, we get after computation
\begin{equation}
\label{eqha191}
\begin{split}
R_{113}^1&=\mu_{y^{\prime}}+2\delta\\
R_{123}^1&=\nu-\delta_{y^{\prime}}\\
R_{113}^2&=f_{y^{\prime}}+3\mu\\
R_{123}^2&=-(\mu_{y^{\prime}}+2\delta)
\end{split}
\end{equation}
and this gives
\begin{equation}
\label{eqha187}
\mu=-\dfrac{1}{3}f_{y^\prime},\quad \delta=\dfrac{1}{6}f_{y^\prime y^\prime},\quad\nu=\dfrac{1}{6}f_{y^\prime y^\prime y^\prime}
\end{equation}
In other words we have the following result
\begin{equation}
\label{eqha188}
\begin{split}
\eta^1&=dx\\
  \eta^2&=dy-y^\prime dx\\
\eta^1_1&=\dfrac{1}{3}f_{y^\prime}dx+\dfrac{1}{6}f_{y^\prime y^\prime}(dy-y^\prime dx)\\
\eta^1_2&=\dfrac{1}{6}f_{y^\prime y^\prime}dx+\dfrac{1}{6}f_{y^\prime y^\prime y^\prime}(dy-y^\prime dx)\\
\eta^2_1&=dy^\prime-f dx-\dfrac{1}{3}f_{y^\prime}(dy-y^\prime dx).
\end{split}
\end{equation}
Applying the identity \eqref{eqha182} we get $w_\alpha$ and recover also the local expression of $\omega_\alpha$ of equation \eqref{eqha46}:
$$\omega_\alpha=w_\alpha.$$
Hence the uniqueness of $\omega$. Assuming the above notations, we summarize the whole discussion of the section in the following theorem
\begin{theorem}
\label{th}
To the equivalence of the second order differential equations $y^{\prime\prime}=f(x,y,y^\prime)$ under area-preserving transformations satisfying the relation given by an area-preserving differential invariant:
$$f_y+\dfrac{2}{9}f_{y^{\prime}}^{2}-\dfrac{1}{3}\mathfrak{D}(f_{y^{\prime}})=0,$$
is associated a Cartan geometry on a $5$-dimensional $H$ principal bundle $B_{G_{(2)}^{(1)}}$ over $\Ji(\R,\R)$ with model $(\mathfrak{asl}(2,\R),\mathfrak{h})$. The associated Cartan connection $\omega$, is (unique) normal. Moreover the curvature of $\omega$ contains all the area-preserving differential invariants as given by equation \eqref{eqha47}.

Finally two second order equations $y^{\prime\prime}=f(x,y,y^\prime)$ and $\overline{y}^{\prime\prime}=\overline{f}(\overline{x},\overline{y},\overline{y}^\prime)$ satisfying the relations
$$f_y+\dfrac{2}{9}f_{y^{\prime}}^{2}-\dfrac{1}{3}\mathfrak{D}(f_{y^{\prime}})=0$$
and 
$$\overline{f}_{\overline{y}}+\dfrac{2}{9}\overline{f}_{\overline{y}^{\prime}}^{2}-\dfrac{1}{3}\mathfrak{\overline{D}}(\overline{f}_{\overline{y}^{\prime}})=0$$

respectively, are area-preserving equivalent if and only if there exists a bundle map $F:B_{G_{(2)}^{(1)}}\to \overline{B}_{G_{(2)}^{(1)}}$, ie commuting with the right action of $H$, with obvious notations, such that
$$F^\star\overline{\omega}=\omega.$$ 
\end{theorem}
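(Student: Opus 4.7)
The plan is to package the coframe $(\theta^1,\theta^2,\theta^3,\Omega^1,\Omega^2)$ of the $\{e\}$-structure $B_{G_{(2)}^{(1)}}$ into an $\mathfrak{asl}(2,\R)$-valued matrix as in \eqref{eqha46}, verify the three Cartan-connection axioms of Definition \ref{defi1}, pin down uniqueness via normalization, and then deduce the equivalence criterion as a corollary. First I would define $\omega_\alpha$ locally on each trivialization $U_\alpha\times H$ by the block arrangement of \eqref{eqha46}, which is manifestly $\mathfrak{asl}(2,\R)$-valued (vanishing first row, traceless $2\times 2$ block), and then glue the local pieces through \eqref{eqha45} using a locally finite cover. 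The injectivity condition (1) of Definition \ref{defi1} is immediate from $(\theta^1,\theta^2,\theta^3,\Omega^1,\Omega^2)$ being a coframe on a $5$-manifold with $\dim\mathfrak{asl}(2,\R)=5$. For condition (3), I would observe that the $H$-action $(p,S)\mapsto(p,ST)$ makes every fundamental vector field of the form $(0,V)$ with $V$ the left-invariant field on $H$, so applying $\omega_\alpha$ reduces to reading off the Maurer--Cartan form $\omega_H$ in \eqref{eqha49} from the lower-right block of $\omega_\alpha$ restricted to a fiber.

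The main obstacle is condition (2), the equivariance $(R_h)^{\star}\omega=\mathrm{Ad}_{h^{-1}}\omega$, which requires a careful comparison of two nontrivial matrix expressions. I would compute $\mathrm{Ad}_{T^{-1}}\omega_\alpha=T^{-1}\omega_\alpha T$ explicitly using the inverse of the matrix in \eqref{eqha44}, producing the expression \eqref{eqha51}. In parallel I would pull back $\omega_\alpha$ by $\Phi_T$: the transformation of $\theta^1,\theta^2,\theta^3$ under the $H$-action is read off from their definitions in \eqref{eqha43}, yielding \eqref{eqha52}, while the forms $\Omega^1,\Omega^2$ pull back via $\Phi_T^\star\omega_H=\mathrm{Ad}_{T^{-1}}\omega_H$ (a standard property of the Maurer--Cartan form together with the semi-direct decomposition of the trivialization). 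Matching entries of the two matrices then establishes \eqref{eqha51}. Once the three axioms are in place, the curvature computation $d\omega_\alpha+\omega_\alpha\wedge\omega_\alpha$ can be organized using the structure equations \eqref{eqha37}, and the off-diagonal entries collapse exactly into the invariants $I_1,I_2,I_3$, giving \eqref{eqha47}; in particular the curvature carries every area-preserving relative invariant.

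For normality and uniqueness I would reverse the construction: start from a generic $\mathfrak{asl}(2,\R)$-valued connection $\eta$ on a fixed section $s\colon U_\alpha\to U_\alpha\times H$ (so $u_1=1,u_3=0$), written as in \eqref{eqha183} with trace-free $2\times 2$ block, and impose the natural normalization conditions \eqref{eqha300}: vanishing torsion $\Theta^1=\Theta^2=0$ together with vanishing of the curvature components along the "prolongation direction" $\Pi$. Choosing $\eta^1=dx$, $\eta^2=dy-y'dx$ and applying Cartan's lemma to the torsion conditions produces the ansatz \eqref{eqha186}, in which the unknown functions $\mu,\delta,\nu$ are then uniquely determined by \eqref{eqha191} to be \eqref{eqha187}. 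Lifting $\eta$ to the full trivialization via the gauge rule $\omega_\alpha=h^{-1}\eta h+h^{-1}dh$ recovers exactly the matrix \eqref{eqha46}; this shows that the connection constructed in the first part is the unique normal Cartan connection.

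The final equivalence statement is then essentially functorial. If $\Psi$ is an area-preserving equivalence between $y''=f$ and $\overline{y}''=\overline{f}$ (both satisfying the vanishing invariant), then the construction of $B_{G_{(2)}^{(1)}}$ in Section \ref{sec3} is natural in $\Psi$, so $\Psi$ lifts to a bundle map $F$ with $F^\star\overline{\theta}^i=\theta^i$ and $F^\star\overline{\Omega}^j=\Omega^j$, hence $F^\star\overline{\omega}=\omega$ and $F$ commutes with the $H$-action. Conversely, given any bundle map $F$ satisfying $F^\star\overline{\omega}=\omega$, the preserved forms $\theta^1,\theta^2,\theta^3$ are semi-basic over $\Ji(\R,\R)$ and generate the contact-plus-total ideal characterizing the equation, so $F$ descends to a diffeomorphism of $\Ji(\R,\R)$ that preserves both the contact structure and the area form $\omega^1\wedge\omega^2$; by the lemma in Section \ref{sec2} this diffeomorphism is the first prolongation of an area-preserving transformation mapping solutions of one equation to solutions of the other.
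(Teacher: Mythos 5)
Your proposal is correct and follows essentially the same route as the paper: the same packaging of the coframe into the $\mathfrak{asl}(2,\R)$-valued matrix \eqref{eqha46}, the same verification of the three axioms of Definition \ref{defi1} (coframe dimension count, the explicit $\mathrm{Ad}_{T^{-1}}$ computation \eqref{eqha51} matched against \eqref{eqha52} and the Maurer--Cartan identity, and the fundamental vector field argument via \eqref{eqha49}), the same curvature identification \eqref{eqha47}, and the same uniqueness argument via the normalization conditions \eqref{eqha300}, Cartan's lemma, and the gauge lift \eqref{eqha182}. Your closing paragraph on the equivalence criterion is in fact slightly more explicit than the paper, which leaves that part as a summary of the Cartan equivalence method, but it is the same underlying argument.
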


\bibliographystyle{hplain} 
\bibliography{mybiblio3}

\end{document}